\newtheorem{thm}{Theorem}[section]
\newtheorem{lem}[thm]{Lemma}
\newtheorem{cor}[thm]{Corollary}
\newtheorem{prop}[thm]{Proposition}
\newtheorem{rem}[thm]{Remark}
\theoremstyle{definition}
\newtheorem{defn}[thm]{Definition}
\DeclareMathOperator{\aut}{Aut}
\DeclareMathOperator{\sides}{{Sides}}
\DeclareMathOperator{\missing}{{Missing}}
\DeclareMathOperator{\area}{{Area}}
\DeclareMathOperator{\link}{Link}
\DeclareMathOperator{\perimeter}{\text{\textsc{Per}}}
\DeclareMathOperator{\stabilizer}{\ensuremath{Stab}}
\DeclareMathOperator{\fixer}{\ensuremath{Fix}}
\DeclareMathOperator{\added}{\ensuremath{Added}}
\newcommand{\size}[1]{\ensuremath{\vert #1 \vert}}
\newcommand{\nclose}[1]{\ensuremath{\langle\!\langle#1\rangle\!\rangle}}
\newcommand{\mc}{\mathcal }
\newcommand{\mb}{\mathbb }
\newcommand{\abrm}{(\mc A \ast \mc B)/ \nclose{r^m}}
\newcommand{\Z}{\mathbb{Z}}
\begin{document}
\title[Local Quasiconvexity of Groups]{Local Quasiconvexity of Groups acting on Small Cancellation Complexes}
\author[E.Mart\'inez-Pedroza]{Eduardo Mart\'inez-Pedroza}
      \address{
               McMaster University\\
               Hamilton, Ontario, Canada L8P 3E9}
      \email{emartinez@math.mcmaster.ca}
\author[D.~T.~Wise]{Daniel T. Wise}
      \address{
                     McGill University \\
               Montreal, Quebec, Canada H3A 2K6 }
      \email{wise@math.mcgill.ca}
\subjclass[2000]{}
\keywords{Local quasiconvexity, Coherence, Small-cancellation, Relative hyperbolic groups, quasiconvex subgroups}
\date{May 3, 2011}

\begin{abstract}
Given a group acting cellularly and cocompactly on a simply-connected  2-complex, we provide a criterion establishing that all finitely generated subgroups have quasiconvex orbits. This work generalizes the ``perimeter method''. As an application, we show that high-powered one-relator products $\mc A \ast \mc  B /  \nclose{r^n}$ are coherent if $\mc A$ and $\mc B$ are coherent.
\end{abstract}

\maketitle

\section{Introduction}
A group $\mc G$ is \emph{coherent} if each finitely generated subgroup $\mc H$ of $\mc G$ is finitely presented.
A group $\mc G$ is \emph{locally quasiconvex} if each finitely generated subgroup is quasiconvex.
Recall that a subgroup $\mc H$ of  $\mc G$ is \emph{quasiconvex}
if there is a constant $L$, such that every geodesic in the Cayley graph of $\mc G$  that joins two elements of $\mc H$ lies in an $L$-neighborhood of $\mc H$. While $L$ depends upon the choice of Cayley graph,
it is well-known that the quasiconvexity of $\mc H$ is independent of the finite generating set when $\mc G$ is hyperbolic.

 A simple method for proving coherence and local quasiconvexity was given in \cite{McWi-coherence}
  which introduced the \emph{perimeter} of a combinatorial map. One of the main applications there was the following
   \cite{McWi-coherence} (see also \cite{HruskaWise-Torsion}).
\begin{thm}\label{thm:MWcoherence}
Let $r$ be a cyclically reduced word and let $\mc G=\langle a,b, \dots | r^m \rangle$.
\begin{enumerate}
\item \label{MW:co}If $m \geq |r|-1$ then $\mc G$ is coherent.
\item \label{MW:lqc} If $m\geq 3|r|$ then $\mc G$ is locally quasiconvex.
\end{enumerate}
\end{thm}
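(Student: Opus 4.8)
The plan is to derive Theorem~\ref{thm:MWcoherence} from the general criterion announced in the abstract --- a perimeter-type statement about a group acting cellularly and cocompactly on a simply-connected $2$-complex --- specialized to the action of $\mc G=\langle a,b,\dots\mid r^m\rangle$ on the universal cover $\widetilde X$ of its presentation complex $X$; the real work is then to verify the hypotheses and track the numerical thresholds. Here $X$ has one vertex, one $1$-cell per generator and a single $2$-cell $f$ attached along the reduced closed path spelling $r^m$, whence $|\partial f|=m|r|$; since only finitely many generators occur in $r$ (the rest span a free factor, affecting neither coherence nor local quasiconvexity), $\widetilde X$ is locally finite, simply connected, and $\mc G$ acts on it freely, cellularly and cocompactly, and for part~\eqref{MW:lqc} $\mc G$ is in addition word-hyperbolic (a one-relator group with torsion), so quasiconvexity is generating-set independent and quasiconvex orbits in $\widetilde X^{(1)}$ give quasiconvex subgroups. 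I would first make the standard reduction that $r$ is not a proper power: if $r=s^k$ then $\langle a,b,\dots\mid r^m\rangle=\langle a,b,\dots\mid s^{km}\rangle$ with $|s|=|r|/k$, and $m\ge|r|-1$ and $m\ge 3|r|$ each force the analogous inequality for $(s,km)$. With $r$ not a proper power, every piece of the cyclic word $r^m$ --- every path along which two distinct elevations of the attaching map of $f$ fellow-travel in $\widetilde X$ --- has length at most $|r|-1$; this is the small-cancellation input the criterion consumes, and substituting ``piece $\le|r|-1$'' and ``$|\partial f|=m|r|$'' into its two hypotheses is exactly what yields the thresholds $m\ge|r|-1$ and $m\ge 3|r|$.

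The engine is the perimeter method. Given a finitely generated $\mc H\le\mc G$, form $\widehat X=\widetilde X/\mc H$ and choose a finite connected subcomplex $Y_0\subseteq\widehat X$ whose $1$-skeleton is a Stallings-type core graph carrying a generating set of $\mc H$. Then iteratively improve $Y$: \emph{fold} edges so that $Y^{(1)}\to X^{(1)}$ is an immersion, and \emph{attach} any $2$-cell $f'$ of $\widehat X$ that is absent from $Y$ but so much of whose boundary already lies in $Y^{(1)}$ that the missing arc of $\partial f'$ --- no longer than a piece --- is short compared with the present part. Assign to each stage the perimeter $\perimeter(Y)$: a non-negative integer recording the total boundary length of $2$-cells of $\widehat X$ absent from $Y$ yet exposed along $Y^{(1)}$, equivalently a weighted count of $\missing$ $2$-cells over $\edges(Y)$. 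The lemma to prove is that $\perimeter$ is unchanged by a fold and strictly decreases at each attachment: deleting an almost-complete $2$-cell from the deficit recovers its long present arc, while the short missing arc newly added to $Y^{(1)}$ exposes only a controlled amount of new deficit, and ``piece $\le|r|-1$'' together with $m\ge|r|-1$ is precisely what makes the net change negative. As $\perimeter(Y_0)<\infty$ and drops at each attachment, after finitely many steps no $2$-cell can be attached; then $Y\hookrightarrow\widehat X$ is $\pi_1$-surjective, so $\pi_1 Y\cong\mc H$ and $\mc H$ is finitely presented, giving part~\eqref{MW:co}.

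For part~\eqref{MW:lqc} one keeps the same compact $Y$ and, passing to its universal cover $\widetilde Y$ (a simply-connected $2$-complex with cocompact $\mc H$-action sitting inside $\widetilde X$), must upgrade the conclusion to: $\widetilde Y$ is a \emph{quasiconvex} subcomplex of $\widetilde X$, i.e.\ every $\widetilde X$-geodesic between vertices of $\widetilde Y$ stays in a bounded neighbourhood of $\widetilde Y$. I would obtain this from a disc-diagram argument: a minimal-area diagram spanning such a geodesic and a $\widetilde Y$-path is, by a Greendlinger/ladder-type structure theorem valid once the attaching paths are long relative to pieces, a thin strip of $2$-cells each contributing a long boundary arc, from which one reads off linear control on how far the geodesic leaves $\widetilde Y$; since $\widetilde Y^{(1)}$ is quasi-isometric to $\mc H$ inside $\widetilde X^{(1)}\simeq\mathrm{Cay}(\mc G)$, this yields quasiconvexity of $\mc H$. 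The threshold rises from $m\ge|r|-1$ to $m\ge 3|r|$ because this diagram estimate needs, roughly, about three pieces' worth of slack per $2$-cell rather than one.

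I expect the genuine obstacle to be the perimeter inequality at an attachment: one must bound, for each newly added edge, how many $2$-cells of $\widehat X$ run through it and how much of each is already present, and check that the net change of $\perimeter$ is negative under precisely the stated hypothesis on $m$ --- this is where all the small-cancellation bookkeeping (piece lengths, reducedness of the partially completed complex, the precise ``missing arc short'' attachment rule) has to be assembled carefully. Verifying that $\perimeter$ remains a well-defined finite monotone quantity throughout, and that a $2$-cell-saturated $Y$ is genuinely $\pi_1$-onto, is the other delicate point, though comparatively routine.
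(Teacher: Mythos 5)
This theorem is quoted from \cite{McWi-coherence} (with \cite{HruskaWise-Torsion} also cited); the present paper does not re-prove it, so there is no in-paper proof to compare against. The paper's own new machinery (Theorem~\ref{thm:C}, proved as Theorem~\ref{thm:local-quasiconvexity}) generalizes only the local quasiconvexity criterion, and the paper explicitly positions it as an analogue of Theorem~\ref{thm:Cbase}, not of Theorem~\ref{thm:MWcoherence}.

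With that caveat, your plan to derive both parts from ``the general criterion announced in the abstract'' has a genuine gap at part~(\ref{MW:co}). Specializing Theorem~\ref{thm:local-quasiconvexity}, together with Remark~\ref{rem:no-inversions}, to the appropriate Cayley $2$-complex of $\langle a,b,\dots\mid r^m\rangle$ yields local quasiconvexity under roughly $m>3|r|$, which matches part~(\ref{MW:lqc}). But that machinery is calibrated to eliminate missing $\leq3$-shells and to make the perimeter \emph{strictly} decrease, and it cannot give the much weaker coherence threshold $m\geq|r|-1$ of part~(\ref{MW:co}). In \cite{McWi-coherence} coherence rests on a separate, strictly easier perimeter criterion (roughly: no missing $\leq1$-shells and monotone non-increase of perimeter, plus an auxiliary termination argument), and the present paper deliberately does not redevelop that line --- its only coherence statement for one-relator products, Theorem~\ref{thm:MWparallel}(1), is obtained indirectly via relative quasiconvexity and Theorem~\ref{thm:M}, and only under the higher bound $3|r|<m$. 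Your sentence claiming that ``piece $\leq|r|-1$ together with $m\geq|r|-1$ is precisely what makes the net change negative'' is asserted rather than derived, and it does not follow from Lemma~\ref{lem:perimeter-estimation}: tracing those inequalities gives $3\lambda M<1$, i.e.\ roughly $m>3|r|$, not $m\geq|r|-1$. A secondary error: the $\mc G$-action on the universal cover of the presentation complex is \emph{not} free on $2$-cells. With one $2$-cell per embedded $r^m$-cycle each $2$-cell has $\integers_m$ stabilizer; if instead one keeps $m$ distinct $2$-cells the action is free but $M$ is multiplied by $m$ and $3\lambda M<1$ fails identically. Tracking $\aut_{\mc H}(R)$ is exactly what Lemmas~\ref{lem:counting-sides} and~\ref{lem:perimeter-estimation} are built for, and is the reason the paper's equivariant formulation of the perimeter differs from the map-based one of \cite{McWi-coherence} in the first place.
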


The initial motivation in \cite{McWi-coherence} was to examine the coherence of one-relator groups with torsion, engaging with the well-known problem of whether every one-relator group is coherent. The method, however turned out to be widely applicable for
suitably deficient small-cancellation groups.

In this paper we revisit the perimeter method, and redefine it for $\mc H$-equivariant embeddings $Y\subset X$ (to the universal cover)
instead of maps $Y\rightarrow X$ (to the base space). This new approach is flexible enough to deal with torsion. In contrast,  the method in \cite{McWi-coherence} was restricted to torsion arising from defining relators that are high-powers of words.

Recall that the $C'(\lambda)$ condition on a 2-complex asserts that $|P|<\lambda|\partial R|$ whenever $P$ is a ``piece'' occurring on the boundary cycle of a 2-cell $R$. To be ``uniformly circumscribed'' means that there is an uniform upper bound on each $|\partial R|$, and  ``$M$-thin'' means that each 1-cell of $X$ lies  on the boundary of at most $M$ 2-cells. A connected subcomplex $Y$ of a 2-complex $X$ is quasi-isometrically embedded if the inclusion of 1-skeletons is a quasi-isometric embedding with respect to the combinatorial path metrics.  Precise definitions are given in Sections~\ref{sec:small-cancellation} and~\ref{sec:lq-criteria}. Our main result is then the following.
\begin{thm}[Locally Quasiconvex]\label{thm:C} 
Let $X$ be a $C'(\lambda)$ 2-complex that is simply-connected, uniformly circumscribed, and $M$-thin.
Suppose that $6\lambda M < 1$. 

If $\mc H\subset \aut (X)$ is finitely generated [relative to a finite collection of $0$-cell stabilizers], 
then there is a quasi-isometrically embedded subcomplex $Y \hookrightarrow X$  on which $\mc H$ acts cocompactly.
\end{thm}

In comparison, the analogous result in \cite{McWi-coherence} is as follows:
\begin{thm}\label{thm:Cbase} 
Let $\mc G = \pi_1X$ where $X$ is a $C'(\lambda)$ 2-complex that is compact and $M$-thin.
Suppose that $3\lambda M < 1$.
Then $\mc G$ is locally quasiconvex.
\end{thm}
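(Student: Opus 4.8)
The plan is to run the \emph{perimeter method} of \cite{McWi-coherence} in the universal cover. Let $\widetilde X\to X$ be the universal cover, so that $\mc G=\pi_1X$ acts freely and cocompactly on the simply-connected complex $\widetilde X$; the $C'(\lambda)$ and $M$-thin conditions are local and pass to $\widetilde X$, and since $X$ is compact there is an $N$ with $|\partial R|\le N$ for every $2$-cell $R$ of $\widetilde X$. As $\widetilde X^{(1)}$ is $\mc G$-equivariantly quasi-isometric to a Cayley graph of $\mc G$, it is enough to show: for any finitely generated $\mc H\le\mc G$ and any $0$-cell $\widetilde x$, there is a connected, $\mc H$-invariant, $\mc H$-cocompact subcomplex $Y\subseteq\widetilde X$ with $\widetilde x\in Y^{(0)}$ such that every $\widetilde X^{(1)}$-geodesic between two $0$-cells of $Y$ stays in a uniformly bounded neighbourhood of $Y^{(1)}$. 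Since then $Y^{(1)}$ lies within finite Hausdorff distance of $\mc H\widetilde x$, the orbit $\mc H\widetilde x$ is quasiconvex and $\mc H$ is a quasiconvex subgroup. (Theorem~\ref{thm:C}, applied to the free action $\mc G\curvearrowright\widetilde X$, already gives this under the stronger hypothesis $6\lambda M<1$; the point is to do it under the weaker $3\lambda M<1$, which is possible because the torsion-free count below is more efficient than the equivariant one.)

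To produce $Y$, fix a finite generating set $s_1,\dots,s_k$ of $\mc H$, pick combinatorial paths $\omega_i$ in $\widetilde X^{(1)}$ from $\widetilde x$ to $s_i\widetilde x$, and set $Y_0=\mc H\cdot\bigcup_i\omega_i$: a connected, $\mc H$-invariant, $\mc H$-cocompact subcomplex carrying the Cayley graph of $(\mc H,\{s_i\})$. For such a subcomplex $Y$, define for each $1$-cell $e$ of $\widetilde X$ the $\mc H$-invariant integer $\missing_Y(e)=\#\{2\text{-cells of }\widetilde X\text{ on }e\}-\#\{2\text{-cells of }Y\text{ on }e\}\in\{0,\dots,M\}$, and put $\perimeter(Y)=\sum_e\missing_Y(e)$, the sum over a set of $\mc H$-orbit representatives of $1$-cells of $Y$; this is a non-negative integer because $Y/\mc H$ is finite. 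The enlargement move is: whenever a $2$-cell $R$ of $\widetilde X$ with $R\notin Y$ contains an arc $\sigma\subseteq\partial R\cap Y$ with $M\cdot|\partial R\smallsetminus\sigma|\le|\partial R|-1$, replace $Y$ by $Y\cup\mc H R$. Such a $\sigma$ has length $\ge 1$, so the move keeps $Y$ connected, $\mc H$-invariant and $\mc H$-cocompact; and since the action is free, a direct incidence count using $M$-thinness shows the move changes the perimeter by at most $-|\partial R|+M\,|\partial R\smallsetminus\sigma|\le -1$. As $\perimeter(\cdot)\ge0$, the process stops at some $Y_\infty$ to which no move applies.

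Now fix $0$-cells $y,y'$ of $Y_\infty$, let $\gamma$ be a $\widetilde X^{(1)}$-geodesic and $\alpha$ a $Y_\infty^{(1)}$-geodesic joining them, and choose a disk diagram $D\to\widetilde X$ of minimal area with boundary path $\alpha\bar\gamma$ (possible as $\widetilde X$ is simply connected); it is reduced, and since $\alpha,\gamma$ are reduced we may assume $D$ has no spurs. If $D$ has no $2$-cell then $\gamma$ lies in $Y_\infty^{(1)}$. Otherwise $3\lambda M<1$ and $M\ge2$ give $\lambda<\tfrac{1}{6}$, so the classification of reduced disk diagrams over $C'(\lambda)$ complexes with geodesic boundary arcs applies: either $D$ is a ``ladder'' of rungs of diameter $\le N$, in which case $\gamma$ lies in the $N$-neighbourhood of $\alpha\subseteq Y_\infty^{(1)}$, or $D$ has a shell, namely a $2$-cell $R$ of $D$, with image $R_0$ in $\widetilde X$, with $\partial R=\sigma\tau$ where $\sigma$ is an arc on $\partial D$, $\tau$ is a concatenation of at most three pieces, and $|\sigma|>(1-3\lambda)|\partial R_0|\ge\tfrac{1}{2}|\partial R_0|$. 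From the piece bound, $|\tau|<3\lambda|\partial R_0|$, hence $M|\tau|<3\lambda M|\partial R_0|<|\partial R_0|$ and so $M|\tau|\le|\partial R_0|-1$. If $\sigma$ is a subpath of $\gamma$, replacing it by $\tau$ shortens $\gamma$, contradicting geodesy. If $\sigma$ is a subpath of $\alpha$, then $\sigma\subseteq\partial R_0\cap Y_\infty$ and $|\partial R_0\smallsetminus\sigma|\le|\tau|$; if $R_0\notin Y_\infty$ the enlargement move applies to $R_0$, contradicting terminality of $Y_\infty$, while if $R_0\in Y_\infty$ then $\tau$ is a path in $Y_\infty^{(1)}$ shorter than the subpath $\sigma$ of $\alpha$, contradicting that $\alpha$ is geodesic. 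So only the ladder alternative survives, which completes the proof.

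The main obstacle, I expect, is the last paragraph: pinning down the disk-diagram classification in the $M$-thin $C'(\lambda)$ setting and ruling out every non-ladder configuration (including shells that meet a corner of $\partial D$, and small degenerate diagrams) so that terminality of $Y_\infty$ can be fed in cleanly, while tracking the dependence of the neighbourhood constant on $\lambda,M,N$. A secondary point needing care is the perimeter accounting: checking that the move strictly decreases $\perimeter$, that this goes through under $3\lambda M<1$ rather than the $6\lambda M<1$ of the equivariant setting, and that $2$-cells whose boundary cycle repeats a $1$-cell are counted correctly.
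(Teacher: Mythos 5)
Your proof is correct and follows essentially the same perimeter-reduction route as the paper (Section~\ref{sec:lq-criteria}), specialized to the free deck action of $\mc G$ on the universal cover: the key point that freeness forces trivial $2$-cell stabilizers (so $\aut_{\mc H}(R)=1$, and the count yields the constant $3\lambda M$ rather than $6\lambda M$) is exactly what Proposition~\ref{prop:perimeter} and Lemma~\ref{lem:perimeter-estimation} need in order to run under the hypothesis $3\lambda M<1$. The paper itself states Theorem~\ref{thm:Cbase} as a quotation from \cite{McWi-coherence} rather than reproving it, so your argument amounts to a self-contained deduction of that theorem from the paper's own equivariant framework.
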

The statements of Theorems~\ref{thm:C}~and~\ref{thm:Cbase} are almost identical except that $3\neq 6$ - a difference that disappears if we assume that $\mc H$ acts without inversions on the 1-skeleton (see Remark~\ref{rem:no-inversions}).

To get a feel for Theorem~\ref{thm:C}, let us first describe some special cases under the ordinary finite generation hypothesis.
Theorem~\ref{thm:C} applies in a variety of new situations were one would hope to apply the method of \cite{McWi-coherence}.
For instance, it implies the local quasiconvexity of groups acting properly and cocompactly on sufficiently thin 2-dimensional hyperbolic buildings and polygons of finite groups - something unobtainable directly using \cite{McWi-coherence} without knowing virtual torsion-freeness (which is not obvious \cite{WisePolygons}). Likewise it often applies when $X$ lies in a rich class of beautiful 2-complexes
 studied by Haglund in \cite{Haglund1991}: A \emph{$(p,r)$ Gromov polyhedron} $X$ is a simply-connected 2-complex
where each 2-cell is a $p$-gon, and each $0$-cell $x$ has $\link(x)\cong K(r)$. So  $X$ is $C'(\frac1{p-\epsilon})$ and $p$-circumscribed, and $(r-1)$-thin. Haglund constructed many groups acting properly and cocompactly on these Gromov polyhedra, but very few  of these are known to be virtually torsion-free. We then have:
\begin{cor}
Let $\mc G$ act properly and cocompactly on a $(p,r)$ Gromov polyhedron.
Then $\mc G$ is locally quasiconvex provided that $\frac{6}{p}(r-1)<1$.
\end{cor}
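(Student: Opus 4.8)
The plan is to apply Theorem~\ref{thm:C}, with $X$ the given $(p,r)$ Gromov polyhedron and with $\mc H$ an arbitrary finitely generated subgroup of $\mc G$ acting on $X$ through the given action. As recalled just above the statement, such an $X$ is simply-connected, is $p$-circumscribed (each $2$-cell being a $p$-gon) and hence uniformly circumscribed, is $(r-1)$-thin (each $1$-cell incident to a $0$-cell $x$ is a vertex of $\link(x)\cong K(r)$, a graph whose vertices all have valence $r-1$), and satisfies $C'(\tfrac{1}{p-\epsilon})$ for every $\epsilon>0$. It therefore remains only to secure the hypothesis $6\lambda M<1$. Taking $M=r-1$ and $\lambda=\tfrac{1}{p-\epsilon}$, this reads $6(r-1)<p-\epsilon$; since the assumption $\tfrac{6}{p}(r-1)<1$ gives $6(r-1)<p$ \emph{strictly}, we may fix $\epsilon>0$ with $6(r-1)<p-\epsilon$, and then $X$ is a $C'(\lambda)$, simply-connected, uniformly circumscribed, $M$-thin complex with $6\lambda M<1$. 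Note in particular that necessarily $\lambda<\tfrac16$.

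Since $\mc G$ acts on $X$ properly, every $0$-cell stabilizer in $\mc G$ is finite, so the finitely generated subgroup $\mc H$ is a fortiori finitely generated relative to the empty collection of $0$-cell stabilizers. Theorem~\ref{thm:C} then produces a constant $L$ so that the orbit $\mc Hx$ is $L$-quasiconvex in $X^{(1)}$ for each $x\in X^0$.

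Finally I would transfer quasiconvexity from $X$ to a Cayley graph of $\mc G$. Because $X$ is a $C'(1/6)$, simply-connected, uniformly circumscribed, $M$-thin complex, small-cancellation theory shows $X^{(1)}$ is $\delta$-hyperbolic; and since $\mc G$ acts properly and cocompactly on $X$, the Milnor--\v{S}varc lemma makes the orbit map $g\mapsto gx$ a quasi-isometry from the Cayley graph of $\mc G$ (on any finite generating set) onto $X^{(1)}$ carrying $\mc H$ onto $\mc Hx$. In particular $\mc G$ is a hyperbolic group, so quasiconvexity of subgroups does not depend on the chosen finite generating set. As a quasi-isometry between hyperbolic geodesic spaces carries quasiconvex subsets to quasiconvex subsets, the $L$-quasiconvexity of $\mc Hx$ in $X^{(1)}$ yields the quasiconvexity of $\mc H$ in the Cayley graph of $\mc G$; since $\mc H$ was arbitrary, $\mc G$ is locally quasiconvex. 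The one step calling for care is this last transfer --- quasiconvexity survives only quasi-isometries between hyperbolic spaces, and it is the bound $\lambda<\tfrac16$, which forces both $X^{(1)}$ and $\mc G$ to be hyperbolic, that licenses pushing the constant $L$ across the orbit-map quasi-isometry; everything else is bookkeeping of the parameters $p$, $r$, $\lambda$, $M$.
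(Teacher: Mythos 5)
Your proof is correct and follows essentially the same route the paper intends: verify that a $(p,r)$ Gromov polyhedron is simply-connected, $p$-circumscribed, $(r-1)$-thin, and $C'(\tfrac{1}{p-\epsilon})$, fix $\epsilon$ so that $6\lambda M<1$ holds, and invoke Theorem~\ref{thm:C} (treating an ordinary finitely generated subgroup as relatively finitely generated with respect to the empty collection of $0$-cell stabilizers). The paper gives no explicit proof of this corollary, so the one nontrivial thing you supply beyond the parameter bookkeeping is the transfer from quasiconvexity of the orbit $\mc Hx$ in $X^{(1)}$ (which is what Theorem~\ref{thm:C} literally produces) to quasiconvexity of $\mc H$ in a Cayley graph of $\mc G$ (which is what ``locally quasiconvex'' means by the paper's definition). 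That transfer is genuinely needed here because the action is only proper, not free, so $X^{(1)}$ is not itself a Cayley graph; and your justification — linear isoperimetric inequality from Greendlinger's lemma gives hyperbolicity of $X^{(1)}$, Milnor--\v{S}varc gives the quasi-isometry, and quasiconvexity is preserved under quasi-isometries of hyperbolic geodesic spaces — is the right one. Making this step explicit, including the observation that the hypothesis already forces $\lambda<\tfrac16$, is a useful clarification of what the paper leaves implicit.
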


Let us now turn to the more general formulation within Theorem~\ref{thm:local-quasiconvexity} that is aimed to determine relative quasiconvexity in relatively hyperbolic groups. 

A graph $\Gamma$ is \emph{fine} if each edge of $\Gamma$ is contained in only finitely many circuits of length~$n$ for each $n$.
A countable group $\mc G$ is hyperbolic relative to a collection of subgroups $\mb P$ if there is fine and connected hyperbolic graph $\Gamma$  on which $\mc G$ acts cocompactly and with finite edge stabilizers, and  $\mathbb P$ is a set of representatives of vertex stabilizers such that each infinite vertex stabilizer is represented, we refer the interested reader to~\cite{BO99}. In \cite{MaWi10b} we showed:
\begin{thm}[Relative Quasiconvexity Criterion]\label{def:ours} 
A subgroup $\mc H$ of $\mc G$ is \emph{quasiconvex relative to  $\mb P$} if and only if there is a non-empty connected and quasi-isometrically embedded subgraph of $\Gamma$ on which $\mc H$ acts cocompactly.
\end{thm}
In the relatively hyperbolic setting, Theorem~\ref{def:ours} allows  to interpret the locally relatively quasiconvex conclusion of Theorem~\ref{thm:C}. 
This interpretation yields the conclusion of actual local quasiconvexity or coherence provided that the parabolic subgroups have these properties. This employs the following result discussed in~\cite{Ma08}:
\begin{thm}\label{thm:M}
Let $\mc G$ be  finitely generated and hyperbolic relative to a finite collection of subgroups $\mb P$.  If every relatively finitely generated  subgroup of $\mc G$
is relatively quasiconvex, then the following statements hold:
\begin{enumerate}
\item If each $\mc P \in \mb{P}$ is  coherent, then $\mc G$ is coherent.
\item If each $\mc P \in \mb{P}$ is hyperbolic and  locally quasiconvex, then $\mc G$ is hyperbolic and locally quasiconvex.
\end{enumerate}
\end{thm}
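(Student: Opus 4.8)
The plan is to use the hypothesis only in the mild form that every finitely generated subgroup of $\mc G$ is \emph{a fortiori} relatively finitely generated, hence relatively quasiconvex; everything else is structure theory of relatively hyperbolic groups. So fix a finitely generated $\mc H\subseteq\mc G$. By the theorem that a relatively quasiconvex subgroup is itself relatively hyperbolic (Hruska; see also Osin), $\mc H$ is hyperbolic relative to the finite collection $\mb P_{\mc H}$ of representatives of the $\mc H$-conjugacy classes of the infinite subgroups $\mc H\cap\mc P^{g}$, $\mc P\in\mb P$, $g\in\mc G$. The technical core, and the step I expect to be the main obstacle, is that finite generation of $\mc H$ forces each $\mc Q\in\mb P_{\mc H}$ to be finitely generated: this is the one place where relative quasiconvexity does substantial work rather than merely being discarded, and I would extract it from the fine-hyperbolic-graph description of relative quasiconvexity, namely that $\mc H$ acts with finitely many edge orbits and finite edge stabilizers on a fine hyperbolic graph whose infinite vertex stabilizers are exactly the subgroups $\mc Q$, together with the combinatorial fact that a finitely generated group acting this way on a fine graph has finitely generated vertex stabilizers. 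After conjugating, each $\mc Q$ becomes a finitely generated subgroup of some $\mc P\in\mb P$.

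Granting this, part (1) is quick. Since each $\mc P$ is coherent, every $\mc Q\in\mb P_{\mc H}$ is finitely presented. A finitely generated group that is hyperbolic relative to a finite collection of finitely presented subgroups is finitely presented: relative hyperbolicity supplies a finite relative presentation, and substituting finite presentations of the peripheral subgroups yields an ordinary finite presentation. Applied to $\mc H$ and $\mb P_{\mc H}$, this shows $\mc H$ is finitely presented, so $\mc G$ is coherent.

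For part (2) I would first record that $\mc G$ is finitely generated — a finite relative generating set together with finite generating sets of the hyperbolic, hence finitely generated, $\mc P$ — and is hyperbolic relative to hyperbolic subgroups, hence hyperbolic; moreover it is standard that in a group which is simultaneously hyperbolic and hyperbolic relative to $\mb P$ each $\mc P^{g}$ is quasiconvex. Now each $\mc Q\in\mb P_{\mc H}$ is a finitely generated subgroup of the hyperbolic, locally quasiconvex group $\mc P$, so it is quasiconvex in $\mc P$, hence quasiconvex in $\mc G$ by transitivity, and in particular hyperbolic. Thus $\mc H$ is hyperbolic relative to a finite collection of hyperbolic subgroups and is therefore hyperbolic. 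It remains to promote the relative quasiconvexity of $\mc H$ in $(\mc G,\mb P)$ to genuine quasiconvexity in the hyperbolic group $\mc G$; here I would invoke the comparison — valid because $\mc G$ is hyperbolic with $\mb P$ quasiconvex — that a finitely generated subgroup of $\mc G$ is quasiconvex exactly when it is relatively quasiconvex and each of its intersections with a conjugate of a peripheral subgroup is quasiconvex in $\mc G$, both of which hold for $\mc H$. This realizes every finitely generated subgroup of the hyperbolic group $\mc G$ as quasiconvex, so $\mc G$ is hyperbolic and locally quasiconvex. Besides the finite-generation claim for $\mb P_{\mc H}$, this last comparison is the other point that requires care.
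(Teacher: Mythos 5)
This theorem is not proved in the paper at all: it is quoted from~\cite{Ma08}, so there is no ``paper's own proof'' here to compare against. Judged on its own merits, your argument is correct and follows what I believe is the intended route: pass to the induced peripheral structure of a relatively quasiconvex subgroup, verify that its peripherals are finitely generated, and then feed this into the standard ``relatively hyperbolic with nice peripherals'' machinery.

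Two remarks on the points you yourself flag as delicate. First, the claim that finite generation of $\mc H$ forces each $\mc Q\in\mb P_{\mc H}$ to be finitely generated is precisely Osin's Theorem~1.1 in the Memoir (a finitely generated relatively hyperbolic group has finitely generated peripheral subgroups), so you do not need to re-derive it from the fine-graph picture; once Hruska gives you that $\mc H$ is hyperbolic relative to $\mb P_{\mc H}$, Osin's result applies verbatim. Your fine-graph sketch is essentially a proof of that theorem, which is fine, but citing it directly closes the gap cleanly. Second, the comparison you invoke in part~(2) — that when $\mc G$ is hyperbolic with quasiconvex peripherals, a subgroup is quasiconvex iff it is relatively quasiconvex with quasiconvex peripheral intersections — is indeed a known equivalence (Hruska; Mart\'inez-Pedroza), and your application of it is correct since you have already shown each $\mc H\cap\mc P^{g}$ is quasiconvex in $\mc G$. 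With those references in place, the proof is complete.
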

The following application to one-relator products is proven in Section~\ref{sub:lq-products}:
\begin{thm}\label{thm:ABrelQuasiconvex}
Let $\mc A$ and $\mc B$ be countable groups,  let $r \in \mc A \ast \mc B$ be a cyclically reduced word of length at least $2$, and $m>0$ such that $3|r|<m$.

If $\mc H$ is a subgroup of $\abrm$ that is finitely generated relative to $\{\mc A, \mc B\}$, then $\mc H$ is quasiconvex relative to $\{\mc A, \mc B\}$.
\end{thm}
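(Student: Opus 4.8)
The plan is to derive the statement from the relative strengthening of our main result, Theorem~\ref{thm:local-quasiconvexity}, together with the identification of relative quasiconvexity with the quasiconvex-orbit condition proved in \cite{MaWi10b}. Write $\mc G=\abrm$. The scheme has three steps: (i)~construct a simply-connected $C'(\lambda)$ $2$-complex $X$ that is uniformly circumscribed and $M$-thin with constants meeting the hypothesis of Theorem~\ref{thm:C}, on which $\mc G$ acts cellularly with finitely many cell orbits and whose $0$-cell stabilizers are conjugates of $\mc A$ and $\mc B$; (ii)~observe that $X^1$ is then a fine hyperbolic graph on which $\mc G$ acts cocompactly with the conjugates of $\mc A$ and $\mc B$ as vertex stabilizers, so that $\mc G$ is hyperbolic relative to $\{\mc A,\mc B\}$; and (iii)~apply Theorem~\ref{thm:local-quasiconvexity} to conclude that $\mc Hx$ is quasiconvex in $X^1$ — using that finite generation relative to $\{\mc A,\mc B\}$ is exactly finite generation relative to a finite collection of $0$-cell stabilizers — and invoke \cite{MaWi10b} to turn this into the assertion that $\mc H$ is quasiconvex relative to $\{\mc A,\mc B\}$.

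For $X$ I would use the free-product analogue of the presentation complex of \cite{McWi-coherence}. Let $X^1$ be the Bass--Serre graph of the splitting $\mc A\ast\mc B$ realized inside $\mc G$: its vertex set is $\mc G/\mc A\sqcup\mc G/\mc B$, and for each $g\in\mc G$ there is an edge joining $g\mc A$ to $g\mc B$. (Here we use the Freiheitssatz for one-relator products, which gives embeddings $\mc A,\mc B\hookrightarrow\mc G$ with $\mc A\cap\mc B=1$, so that the edges are indexed by $\mc G$.) Then $\mc G$ acts on $X^1$ with two vertex orbits, of stabilizers $\mc A$ and $\mc B$, and a single free edge orbit. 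Writing $r=a_1b_1\cdots a_\ell b_\ell$ in reduced form, so that $|r|=2\ell\ge 2$, the word $r$ labels a path $\mc A,\ a_1\mc B,\ a_1b_1\mc A,\dots,\ r\mc A$ of length $|r|$ along which $r$ translates; since $r$ has order $m$ in $\mc G$, stringing together $m$ consecutive $r$-translates of this path produces an embedded loop $\gamma$ of length $m|r|$ whose setwise stabilizer is the (finite) cyclic subgroup generated by a root of $r$ — equal to $\langle r\rangle\cong\Z/m$ when $r$ is not a proper power. Define $X$ by attaching a disc along each loop in the $\mc G$-orbit of $\gamma$. Then $\mc G$ acts on $X$ with finitely many cell orbits — so cocompactly modulo the $0$-cell stabilizers $\mc A$ and $\mc B$ — and $X$ is simply connected, because $\pi_1X^1$ is the free group $\mc N(r^m)$, freely generated by the $\mc G$-translates of $r^m$, which are precisely the attaching loops.

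It remains to check the quantitative hypotheses. Every disc $R$ has $|\partial R|=m|r|$, so $X$ is uniformly circumscribed. A piece $P$ on $\partial R$ is a path lying on two distinct $\mc G$-translates of $\gamma$, hence a common subsegment of the axes of two distinct conjugates of $r$ in the Bass--Serre tree; since $r$ is cyclically reduced of length $|r|$, such an overlap cannot contain a full period of $r$, so $|P|<|r|$ and $X$ is $C'(1/m)$. The number of translates $h\gamma$ through a fixed edge equals the number of edges of $\gamma$ modulo the cyclic stabilizer of $\gamma$, which is at most $|r|$, so $X$ is $|r|$-thin. Finally $\mc G$ acts without inversions, since it preserves the partition of $X^0$ into $\mc A$-cosets and $\mc B$-cosets; hence by Remark~\ref{rem:no-inversions} it suffices that $3\lambda M<1$, and with $\lambda=1/m$ and $M=|r|$ this is precisely $3|r|<m$. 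This establishes (i); step (ii) follows because $C'(1/m)$ complexes are hyperbolic and a standard argument shows $X^1$ is fine, and step (iii) then completes the proof.

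The step I expect to demand the most care is arranging the constants so they land on $3|r|<m$ exactly. This rests on $\gamma$ being embedded — equivalently, on $r$ having order exactly $m$ in $\mc G$, a standard feature of sufficiently high-powered one-relator products that must nonetheless be cited or re-derived — on the overlap estimate $|P|<|r|$ for the axes of distinct conjugates of $r$, and on the edge count yielding $M=|r|$; secondarily it needs the fineness of $X^1$, which is exactly the input \cite{MaWi10b} requires. Each of these is either classical for one-relator products or a routine consequence of small-cancellation geometry, but assembling them precisely is the real content.
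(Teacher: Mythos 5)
Your proposal follows essentially the same route as the paper: construct a simply-connected small-cancellation $2$-complex on which $\mc G$ acts with $\mc A,\mc B$ (and conjugates) as the $0$-cell stabilizers, verify $C'(1/m)$, uniform circumscription, and $|r|$-thinness, note that $\mc G$ acts without inversions so that the $3\lambda M<1$ threshold from Remark~\ref{rem:no-inversions} applies with $\lambda=1/m$ and $M=|r|$, then combine Theorem~\ref{thm:local-quasiconvexity} with the orbit-quasiconvexity/relative-quasiconvexity equivalence from \cite{MaWi10b}. The paper's proof does precisely this, delegating the construction and verification to Proposition~\ref{prop:free-product-complex}.

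The only real difference is cosmetic: you take the $1$-skeleton to be the quotient of the Bass--Serre tree of $\mc A\ast\mc B$ by $\mc N(r^m)$, with vertex set $\mc G/\mc A\sqcup\mc G/\mc B$ and edges indexed by $\mc G$, whereas the paper uses the coned-off Cayley graph $\widehat\Gamma$, which additionally has a vertex for each element of $\mc G$. The latter is the barycentric subdivision of the former, and the constants $\lambda=1/m$, $M=|r|$ come out the same either way, so the argument is unaffected. One small caution: your aside that embeddedness of $\gamma$ is ``equivalently, $r$ having order exactly $m$'' is not quite right — embeddedness is a stronger statement about no proper prefix of $r^m$ closing up modulo $\mc N(r^m)$ and a coset of $\mc A$ or $\mc B$; it is the Spelling Theorem (Theorem~\ref{prop:filling}, used for this exact purpose in the paper's construction of $\widehat X$) rather than the order of $r$ that gives it. Since you already flag this as a point needing citation, this is an imprecision in the side remark rather than a gap in the proof.
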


We now describe the application that motivated this work,
which is to generalize Theorem~\ref{thm:MWcoherence} to the context of ``one-relator products''.
The following application closely parallels Theorem~\ref{thm:MWcoherence}:
\begin{thm} \label{thm:MWparallel}
Let $\mc A$ and $\mc B$ be countable groups,  let $r \in \mc A \ast \mc B$ be a cyclically reduced word of length at least $2$, and $m>0$ such that $3|r|<m$.
\begin{enumerate}
\item If $\mc A$ and $\mc B$ are coherent, then $\abrm$ is coherent.
\item If $\mc A$ and $\mc B$ are hyperbolic and locally quasiconvex, then $\abrm$ is hyperbolic and locally quasiconvex.
\end{enumerate}
\end{thm}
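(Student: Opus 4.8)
The plan is to combine Theorem~\ref{thm:ABrelQuasiconvex} with Theorem~\ref{thm:M}. Write $\mc G = \abrm$ and $\mb P = \{\mc A, \mc B\}$, where $\mc A$ and $\mc B$ are identified with their images in $\mc G$. Theorem~\ref{thm:M} requires a relatively hyperbolic pair $(\mc G, \mb P)$ in which every relatively finitely generated subgroup is relatively quasiconvex; Theorem~\ref{thm:ABrelQuasiconvex} provides the latter condition word for word, so the work is to establish that $\mc G$ is hyperbolic relative to $\mb P$.

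For this I would analyse the $2$-complex $X$ underlying Section~\ref{sub:lq-products}. Concretely: let $X^1$ be the graph with vertices the cosets in $\mc G/\mc A$ and $\mc G/\mc B$ and, for each $g \in \mc G$, an edge joining $g\mc A$ to $g\mc B$ --- a quotient of the Bass--Serre tree of $\mc A \ast \mc B$ --- and attach $2$-cells along the $\mc G$-orbit of the closed path labelled $r^m$. Then $X$ is simply-connected, and every $2$-cell has boundary length $m|r|$, so $X$ is uniformly circumscribed. Since $r$ is cyclically reduced of length $\geq 2$, the pieces on a $2$-cell boundary have length comparable to $|r|$, so $X$ is $C'(\lambda)$ with $\lambda$ comparable to $1/m$; and the order-$m$ rotational symmetry of the $r^m$-loop (coming from $r^m = 1$ in $\mc G$) forces $X$ to be $M$-thin with $M$ comparable to $|r|$. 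Because $X^1$ is bipartite by vertex type, $\mc G$ acts without inversions, so by Remark~\ref{rem:no-inversions} the hypothesis $6\lambda M<1$ of Theorem~\ref{thm:C} may be relaxed to $3\lambda M<1$, which is exactly what $3|r| < m$ buys. As a small-cancellation complex satisfying these bounds, $X^1$ is hyperbolic and fine; $\mc G$ acts on it cocompactly, with finitely many orbits of edges and with finite (in fact trivial) edge-stabilizers, and the infinite vertex-stabilizers are precisely the conjugates of $\mc A$ and of $\mc B$ --- so in particular $\mc A$ and $\mc B$ embed in $\mc G$. By Bowditch's characterization of relative hyperbolicity, $(\mc G, \mb P)$ is relatively hyperbolic. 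Finally, $\mc G$ is countable and is generated by $\mc A \cup \mc B$, hence finitely generated relative to $\mb P$, so the various notions of relative hyperbolicity and relative quasiconvexity apply despite $\mc A$, $\mc B$, and $\mc G$ not being finitely generated.

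Granting this, Theorem~\ref{thm:ABrelQuasiconvex} says that every subgroup $\mc H \leq \mc G$ that is finitely generated relative to $\mb P$ is quasiconvex relative to $\mb P$; that is, every relatively finitely generated subgroup of $\mc G$ is relatively quasiconvex. Applying Theorem~\ref{thm:M} to the pair $(\mc G, \mb P)$: its part (1) gives that $\mc G$ is coherent when $\mc A$ and $\mc B$ are coherent, and its part (2) gives that $\mc G$ is hyperbolic and locally quasiconvex when $\mc A$ and $\mc B$ are hyperbolic and locally quasiconvex. These are exactly the two assertions of the theorem.

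I expect the main obstacle to be the step above: one must set up the complex $X$ so that the piece bound, the circumscription bound, and the $M$-thinness are simultaneously compatible with the inequality $3|r| < m$ --- this is where that hypothesis is genuinely spent --- and one must verify fineness of $X^1$ and pin down its vertex-stabilizers before Bowditch's criterion can be invoked. With relative hyperbolicity secured, the rest is a direct appeal to Theorems~\ref{thm:ABrelQuasiconvex}~and~\ref{thm:M}.
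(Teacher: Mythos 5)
Your overall strategy is exactly the paper's: establish that $\mc G = \abrm$ is hyperbolic relative to $\mb P = \{\mc A, \mc B\}$, feed Theorem~\ref{thm:ABrelQuasiconvex} into Theorem~\ref{thm:M}, and read off both conclusions. Where you diverge is in how relative hyperbolicity is obtained. The paper simply cites Theorem~\ref{thm:filling}(\ref{filling-2}), which it attributes to the Newman Spelling Theorem (or to Osin's Dehn-filling theorem for large $m$); since $|r|\geq 2$ and $3|r|<m$ force $m\geq 7 \geq 6$, that citation applies and the proof is a one-liner. You instead propose to re-derive relative hyperbolicity from scratch by exhibiting a fine, hyperbolic, cocompact $\mc G$-graph with the right vertex- and edge-stabilizers and appealing to Bowditch's characterization. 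That route is viable and has the virtue of being self-contained within the small-cancellation machinery the paper builds, but you leave the genuinely delicate parts unproved: hyperbolicity and, especially, fineness of the one-skeleton of the coned-off complex; triviality of edge stabilizers, which in your formulation amounts to $g\mc A g^{-1}\cap g\mc B g^{-1}=\{1\}$ and is not a formal consequence of the definitions; and the injectivity of $\mc A\to\mc G$ and $\mc B\to\mc G$ (the Freiheitssatz, Theorem~\ref{thm:filling}(\ref{filling-1})), which you cannot get ``for free'' from pinning down vertex-stabilizers of a graph you built by hand out of cosets, since that construction presupposes those cosets are distinct. Also note a small mismatch: your bipartite coset graph is quasi-isometric to, but not identical with, the coned-off Cayley graph $\widehat\Gamma$ used in Section~\ref{sub:lq-products}, which retains a vertex for each group element. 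None of this invalidates the plan, but in effect you are asking to reprove Theorem~\ref{thm:filling} rather than invoke it; the paper's choice to cite it keeps the proof short and avoids the substantial verification you flag as the ``main obstacle.''
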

\begin{proof}
The group $\abrm$ is hyperbolic relative to $\{\mc A,\mc B\}$, see for example \ref{thm:filling}.(\ref{filling-2}). Consequently, Theorem~\ref{thm:MWparallel} follows by combining Theorem~\ref{thm:ABrelQuasiconvex}  with Theorem~\ref{thm:M}.
\end{proof}

\subsection*{Acknowledgment:}
The first author acknowledges the support of the Geometry and Topology Group at McMaster University through a Postdoctoral Fellowship, and partial support of the CRM in Montreal to attend its special Fall-2010 semester during which part of this paper was prepared.
The second author's research was supported by NSERC.

\section{Disc diagram and small cancellation background}\label{sec:small-cancellation}
This paper follows the notation used in \cite{McWi-fans, McWi-coherence}, and in this section we quote various of those relevant notations.

\begin{defn}[Complexes and Automorphisms]
All complexes considered in this paper are combinatorial $2$-dimensional complexes,
and all maps are combinatorial.
If $X$ is a $2$-complex then $\aut (X)$ denotes the group of cellular automorphisms of $X$.
\end{defn}

\begin{defn}[Path and cycle]\cite[Def 2.6]{McWi-fans}
A \emph{path} is a map $P\rightarrow X$ where $P$ is a subdivided
interval or a single $0$-cell.  In the latter case, $P$ is
\emph{trivial}.  A \emph{cycle} is a map $C\rightarrow X$
where $C$ is a subdivided circle.  Given two paths $P\rightarrow
X$ and $Q\rightarrow X$ such that the terminal point of $P$ and
the initial point of $Q$ map to the same $0$-cell of $X$, their
concatenation $PQ\rightarrow X$ is the obvious path whose domain
is the union of $P$ and $Q$ along these points.  The path
$P\rightarrow X$ is \emph{closed} if the
endpoints of $P$ map to the same $0$-cell of $X$.  A path or cycle
is \emph{simple} if the map is injective on $0$-cells.
The \emph{length} of the path $P$ or cycle $C$ is the number of
$1$-cells in the domain and is denoted by $\size{P}$ or
$\size{C}$.  The \emph{interior} of a path is the path minus its
endpoints.  In particular, the $0$-cells in the interior of a path
are the $0$-cells other than the endpoints.  A \emph{subpath} $Q$
of a path $P$ [or a cycle $C$] is given by a path $Q \rightarrow P
\rightarrow X$ [$Q \rightarrow C \rightarrow X$] in which distinct
$1$-cells of $Q$ are sent to distinct $1$-cells of $P$ [$C$].
Note that the length of a subpath is at most that of the path
[cycle] containing it.  A nontrivial
closed path determines a cycle in the obvious way.  Finally, when
the target space is understood we will usually refer to
$P\rightarrow X$ as the path $P$.
\end{defn}

\begin{defn}[Disc Diagram]\cite[Def 7.4]{McWi-coherence}\label{def:diagrams}
A {\em disc diagram} $D$ is a compact contractible $2$-complex with a
fixed embedding in the plane.  A {\em boundary cycle} $P$ of $D$ is a
closed path in $\partial D$ which travels entirely around $D$ (in a
manner respecting the planar embedding of $D$).

Let $P\rightarrow X$ be a closed null-homotopic path.  A {\em disc
diagram in $X$ for $P$} is a disc diagram $D$ together with a map
$D\rightarrow X$ such that the closed path $P\rightarrow X$ factors as
$P\rightarrow D\rightarrow X$ where $P\rightarrow D$ is the boundary
cycle of $D$.  The van Kampen lemma \cite{Ka33} essentially states
that every null-homotopic path $P\rightarrow X$ is the boundary cycle
of a disc diagram.  Define $\area(D)$ to be the number of $2$-cells
in $D$.  For a null-homotopic path $P\rightarrow X$, we define
$\area(P)$ to equal the minimal number of $2$-cells in a disc diagram
$D\rightarrow X$ that has boundary cycle $P$.  The disc diagram
$D\rightarrow X$ is then a {\em minimal area disc
diagram} for $P$.
\end{defn}

\begin{defn}[Piece]\cite[Def 3.1]{McWi-fans}\label{def:piece}
Let $X$ be a combinatorial $2$-complex.  Intuitively, a piece of $X$
is a path which is contained in the boundaries of the $2$-cells of $X$
in at least two distinct ways.  More precisely, a nontrivial path
$P\rightarrow X$ is a \emph{piece} of $X$ if there are $2$-cells $R_1$
and $R_2$ such that $P\rightarrow X$ factors as $P \rightarrow R_1
\rightarrow X$ and as $P\rightarrow R_2\rightarrow X$ but there does
not exist a homeomorphism $\partial R_1\rightarrow \partial R_2$ such
that there is a commutative diagram:
\begin{equation*}\label{eq:piece}
\begin{array}{ccc}
P             & \rightarrow & \partial R_2\\
\downarrow    & \nearrow    & \downarrow\\
\partial R_1 & \rightarrow & X
\end{array}
\end{equation*}
\end{defn}

\begin{defn}[$C'(\lambda)$ complex]
For a fixed positive real number $\lambda$, the complex $X$ satisfies
$C'(\lambda)$ provided that for each $2$-cell
$R\rightarrow X$, and each piece $P\rightarrow X$ that factors as
$P\rightarrow R\rightarrow X$, we have $\size{P}<\lambda \size{\partial
R}$.
\end{defn}

\begin{figure}\centering
\includegraphics[width=.6\textwidth]{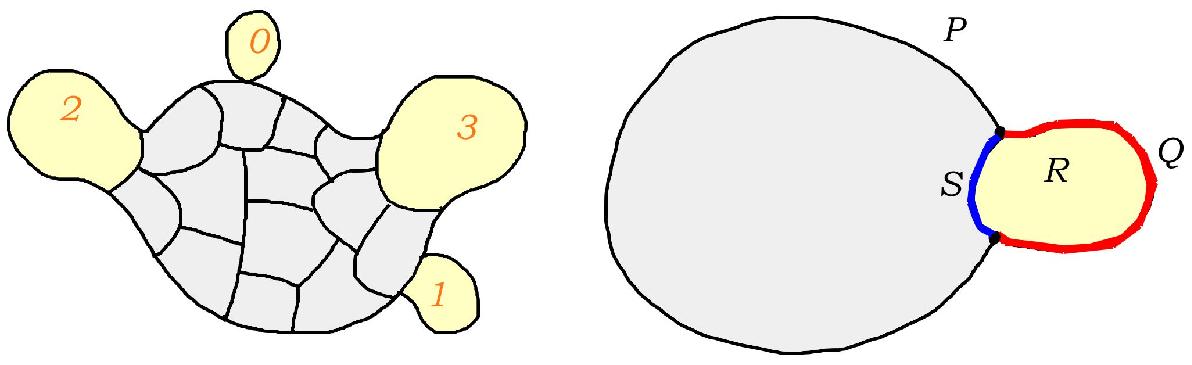}
\caption{Various $i$-shells are indicated on the left. $S$ and $R$ denote the inner and outer paths of the $i$-shell $R$.\label{fig:Shells}}
\end{figure}

\begin{defn}[$i$-shells and spurs]\cite[Def 9.3]{McWi-coherence}\label{def:i-shells}
An \emph{$i$-shell} in a disc diagram $D$ is a $2$-cell $R \hookrightarrow D$ whose boundary cycle $\partial R$ is the concatenation $Q S_1\cdots S_i$ where $Q  \rightarrow D$ is a boundary arc,
the interior of $S_1\cdots S_i$ maps to the interior of $D$,
and  $S_j \rightarrow D$ is a nontrivial interior arc of $D$ for all $j > 0$.
The path $Q $ is the \emph{outer path} of the $i$-shell. See Figure~\ref{fig:Shells}.

A $1$-cell $e$ in $\partial D$ that is incident with a valence~$1$ $0$-cell $v$ is a \emph{spur}.
\end{defn}

\begin{defn}[Arc]\cite[Def 5.4]{McWi-fans}\label{def:sc}
An \emph{arc} in a diagram $D$ is a path $P\rightarrow D$  such that each of its interior 0-cells 
is mapped to a 0-cell with valence 2 in $D$.  An arc which is not a proper subpath of any other arc 
is a \emph{maximal arc}. The arc is \emph{internal} if its interior lies in the interior of $D$,
and it is a \emph{boundary arc} if it lies entirely in $\partial D$.
\end{defn}

\begin{defn}[Doubly-based diagram, Cut tree, Ladder] \cite[Def 5.1, 5.3]{McWi-fans}
A \emph{doubly-based diagram $D$} is a disc diagram in which two (possibly identical) $0$-cells, $s$ and $t$, have been
specified in the boundary cycle of $D$. The $0$-cells $s$ and $t$ are called the \emph{basepoints} of $D$.
The paths $P_1 \rightarrow D$ and $P_2 \rightarrow D$ with $s$ as their common startpoint
and $t$ as their common endpoint and such that $P_1P_2^{-1}$ is the boundary cycle of $D$ are the
\emph{boundary paths determined by the basepoints of $D$.}

The \emph{cut-tree $T$} of a disc diagram $D$ is defined as follows. A $0$-cell $v$ is called a
cut $0$-cell of $D$ provided that $D -\{ v\}$ is not connected.
 Let $V$ be the set of all cut $0$-cells of $D$. A connected component of
$D -\{V\}$ is a \emph{cut-component}. Let $C$ be the set of cut-components of
$D$. The tree $T$ is constructed by adding a black $0$-cell for each $0$-cell $v \in V$
and a red $0$-cell for each component $c \in C$. A $1$-cell connects the $0$-cell for
$v$ to the $0$-cell for $c$ if and only if $v$ is in the closure of $c$. Since each  black $0$-cell disconnects $T$, the graph is a tree.
\end{defn}

\begin{defn}[Ladder] \cite[6.1]{McWi-fans}\label{def:ladder}
Let $D$ be a doubly-based diagram. Suppose that $D$ is not a single 2-cell, suppose that the basepoints of $D$ are distinct and are not cut 0-cells,
and suppose that its cut tree is either trivial or a subdivided interval. Suppose further that if the cut tree is a subdivided interval then the basepoints lie
in the cut components corresponding to the endpoints of the interval. 

Let  $P_1\rightarrow D$ and $P_2\rightarrow D$ be the two boundary paths determined by the basepoints of $D$. 
Then $D$ is called a \emph{ladder} if every maximal internal arc of $D$ begins at a $0$-cell in the interior of $P_1 \rightarrow D$ and ends at a $0$-cell
in the interior of $P_2 \rightarrow D$.  
\end{defn}

\subsection{Greendlinger's lemma}
The following classification of disc diagrams summarizes the basic tool in $C'(\frac16)$ small cancellation theory:
\begin{thm}\cite[Thm 9.4]{McWi-coherence, McWi-fans} \label{thm:fan-classification}
If $D$ is a $C'(1/6)$ disc diagram, then one of the following holds:
\begin{enumerate}
\item $D$ contains at least three spurs and/or $i$-shells with $i\leq 3$.
\item $D$ is a ladder, and hence has a spur, $0$-shell or
$1$-shell at each end.
\item $D$ consists of a single $0$-cell or a single $2$-cell.
\end{enumerate}
\end{thm}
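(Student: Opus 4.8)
The plan is to prove the classification by induction on $\area(D)$, via a combinatorial Gauss--Bonnet (curvature) argument. First I would clear away the degenerate cases: a reduced $C'(1/6)$ disc diagram with no $2$-cells is a tree, for which the conclusion is immediate (a single $0$-cell is case (3), a segment is a trivial ladder, and anything larger has at least three leaves hence three spurs); a reduced diagram with exactly one $2$-cell is case (3); and a non-reduced diagram can be replaced by a reduced one of strictly smaller area by folding a cancellable pair of $2$-cells, so we may assume $D$ is reduced with $\area(D)\ge 2$. I would also record the two standard consequences of $C'(1/6)$ that drive everything: in a reduced diagram every maximal \emph{internal} arc is a piece, so its length is $<\tfrac16\size{\partial R}$ for each $2$-cell $R$ carrying it; summing around $\partial R$, a $2$-cell all of whose boundary arcs are internal has at least $7$ of them, while an $i$-shell $R$ has outer path of length $>(1-\tfrac i6)\size{\partial R}$, which exceeds $\tfrac12\size{\partial R}$ once $i\le 3$.

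For the curvature argument I would pass to the planar complex $D'$ obtained by suppressing all valence-$2$ $0$-cells of $D$, so that edges of $D'$ are the maximal arcs of $D$ and every interior $0$-cell of $D'$ has valence $\ge 3$. Assigning to each corner of a $2$-cell $R$ the interior angle $\pi\bigl(1-\tfrac{2}{d(R)}\bigr)$ of a regular Euclidean $d(R)$-gon — where $d(R)$ is the number of arcs on $\partial R$ — makes every $2$-cell have zero combinatorial curvature, so combinatorial Gauss--Bonnet reads $\sum_v \kappa(v) = 2\pi\chi(D)=2\pi$, the sum being over $0$-cells of $D'$. Using the two $C'(1/6)$ facts above, together with $\valence\ge 3$ at interior $0$-cells and a suitable extra weighting near $\partial D$, one should get $\kappa(v)\le 0$ at every interior $0$-cell; hence all of the positive curvature sits on $\partial D$, where by inspection it can only accumulate at spurs and at $2$-cells meeting $\partial D$ in one long boundary arc, i.e.\ at low-index shells. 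Bounding the curvature a single spur or shell can carry and balancing it against the total $2\pi$ (and against the non-positive interior contributions) then yields the trichotomy: either at least three spurs or $i$-shells with $i\le 3$, which is case (1); or the positive curvature is forced onto exactly two extremal features at opposite ends of $D$, whereupon a rigidity analysis identifies $D$ with a ladder, which is case (2).

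I expect the real work to be the accounting that pins down the constants — that the index threshold is exactly $i\le 3$ and the feature count is exactly three — and, tied to this, the rigidity step that promotes the borderline two-feature case to a ladder in the precise sense of Definition~\ref{def:ladder} (checking the condition on maximal internal arcs, the placement of the basepoints, and that the cut-tree is a subdivided interval). For that last point a cleaner alternative may be the classical Greendlinger-style induction: locate an extremal shell or spur, delete it to obtain a reduced $C'(1/6)$ diagram of smaller area, apply the inductive hypothesis, and reattach, organizing the cases via the cut-tree of $D$ — the ladder case then appears transparently as the one in which the induction keeps producing features only at the two ends. Either way the sole geometric input is the $C'(1/6)$ inequality for pieces, combined with an Euler-characteristic count.
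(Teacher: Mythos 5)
Your high-level strategy — combinatorial Gauss--Bonnet applied to the planar complex $D'$ with valence-$2$ vertices suppressed, non-positive curvature in the interior, positive curvature forced onto boundary features — is the same strategy McCammond and Wise use in the cited source. Your two preparatory observations are also correct: an interior $2$-cell has $\geq 7$ arcs, and an $i$-shell with $i\le 3$ has outer path longer than $\tfrac12|\partial R|$. But the concrete angle assignment you propose does not actually deliver the key inequality $\kappa(v)\le 0$ at interior vertices, and this is not a detail — it is where the proof lives.

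Here is the failure. With angles $\pi(1-2/d(R))$, a $3$-shell has $d(R)=4$ and hence corner angle $\pi/2$. A $3$-shell has interior $0$-cells, namely the junctions of consecutive internal arcs, and these survive into $D'$ provided they have valence $\ge 3$. Take such a vertex $v$ of valence exactly $3$, with its other two incident $2$-cells interior (so $d\ge 7$, angle $\ge 5\pi/7$). Then
\[
\kappa(v) \;=\; 2\pi - \tfrac{\pi}{2} - 2\cdot\tfrac{5\pi}{7} \;=\; \tfrac{\pi}{14} \;>\; 0,
\]
so interior positive curvature is not killed. You flag this with ``a suitable extra weighting near $\partial D$,'' but that clause is exactly the content of the theorem: the source handles it by redistributing the positive contributions of boundary $2$-cells into structured boundary regions (their \emph{fans}), and the counting that turns this into ``three shells with $i\le 3$, or ladder, or single cell'' is precisely the work you are deferring. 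Similarly, the rigidity step — promoting the case of exactly two positively curved boundary features to a ladder in the sense of Definition~\ref{def:ladder}, including the cut-tree and the condition that every maximal internal arc runs from $P_1$ to $P_2$ — is asserted but not argued. As written, the proposal identifies the correct framework and the correct numerical thresholds, but the two places where the $C'(1/6)$ inequality must actually be cashed in (the boundary reweighting, and the ladder rigidity) are left as gestures. Your closing alternative, the Greendlinger-style induction on area organized via the cut-tree, is a legitimate route and arguably closer to a complete plan, but it too is only a pointer.
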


The following well-known consequence of Theorem~\ref{thm:fan-classification} is easily verified:
\begin{cor}\label{cor:boundary cycles embed} Let $X$ be a simply-connected $C'(1/6)$ complex.
Then the boundary cycle of each $2$-cell embeds in $X$.
\end{cor}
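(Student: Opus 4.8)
The plan is to argue by contradiction using the disc diagram classification in Theorem~\ref{thm:fan-classification}. Suppose the boundary cycle $\partial R$ of some $2$-cell $R\rightarrow X$ does not embed in $X$; that is, $\partial R$ fails to be injective on $0$-cells (or on $1$-cells). Since $X$ is simply-connected, $\partial R$ is null-homotopic, so we may choose a minimal-area disc diagram $E\rightarrow X$ with boundary cycle $\partial R$. First I would observe that the failure of $\partial R$ to embed forces $E$ to be nondegenerate in a useful way: if $\partial R$ were already the boundary of a single $2$-cell then it would embed (a single $2$-cell's attaching map is a homeomorphism onto its image in a combinatorial complex), so $E$ is not a single $2$-cell; and since $R$ carries nonzero area, $E$ is not a single $0$-cell either.

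The key step is to extract a contradiction from Theorem~\ref{thm:fan-classification} applied to $E$. In each of the remaining cases, $E$ contains a spur, a $0$-shell, or an $i$-shell with $i\le 3$ along its boundary $\partial E = \partial R$. A spur would mean $\partial R$ backtracks, contradicting that $r$ (the attaching word of $R$) is cyclically reduced, hence $\partial R$ is immersed. A $0$-shell means $E$ is a single $2$-cell, already excluded. So $E$ has an $i$-shell $R'$ with $1\le i\le 3$: its boundary cycle is $QS_1\cdots S_i$ where $Q$ is the outer path lying in $\partial E=\partial R$, and each $S_j$ is a nontrivial internal arc. Then I would note that each $S_j$, being an internal arc, is a common subpath of $\partial R'$ and of the boundary cycle of the neighboring $2$-cell across $S_j$, so $S_j$ is a piece (unless those two $2$-cells coincide compatibly, which minimality of $E$ rules out after possibly reducing), hence $\size{S_j}<\lambda\size{\partial R'}$ with $\lambda\le \tfrac16$. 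Likewise $Q$ is a subpath of both $\partial R'$ and $\partial R$; by minimality of $E$ (no two $2$-cells with a common boundary arc that could be cancelled) $Q$ is also a piece, so $\size{Q}<\tfrac16\size{\partial R'}$. Summing, $\size{\partial R'}=\size{Q}+\sum_{j=1}^i\size{S_j}<\tfrac16(i+1)\size{\partial R'}\le\tfrac46\size{\partial R'}$, which is consistent — so I must sharpen the bookkeeping: the point is rather that $\partial R$ cannot be recovered as $\partial R'$ glued along pieces to the rest of $E$ while being immersed, forcing the diagram structure to collapse. The cleanest route is: if $\partial R$ does not embed, a minimal diagram $E$ with this boundary must have area $\ge 1$ and not be a single $2$-cell, yet by Theorem~\ref{thm:fan-classification} it has an $i$-shell $R'$ whose outer arc $Q$ satisfies $\size{Q}\ge\size{\partial R'}-\sum\size{S_j}>\size{\partial R'}(1-\tfrac{i}{6})\ge\tfrac12\size{\partial R'}$, so $Q$ is more than half of $\partial R'$ and therefore is not a piece; hence $Q$ is a subpath of $\partial R'$ realized in $R$ in a way that means $R'$ and $R$ are "the same" $2$-cell, letting us cancel $R'$ out of $E$ against the boundary — contradicting minimal area, unless $E$ was a single $2$-cell, also a contradiction.

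The main obstacle I expect is the second step — turning the combinatorial output of Theorem~\ref{thm:fan-classification} into a genuine contradiction — because one has to handle carefully the bookkeeping of which arcs are pieces, rule out the degenerate possibility that the ``neighboring'' $2$-cell across an internal arc is identified with $R'$ itself in an incompatible way, and correctly invoke the minimality of $E$ to perform the cancellation. This is exactly the standard Greendlinger-type argument, and I would structure it so that the $i$-shell's outer path being forced to comprise more than $1-\tfrac{i}{6}\ge\tfrac12$ of $\partial R'$ violates the $C'(1/6)$ bound on pieces, which is the crux; the immersion of $\partial R$ (from $r$ cyclically reduced) disposes of the spur case, and the single-cell cases are immediate. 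Once the contradiction is obtained, $\partial R$ embeds, proving the corollary.
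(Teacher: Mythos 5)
The paper offers no written proof (it calls the corollary a ``well-known consequence of Theorem~\ref{thm:fan-classification}''), so I am comparing your proposal against the standard argument. Your plan has a genuine gap, and in fact the gap occurs at the very first reduction, not in the ``bookkeeping'' step you flagged.

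You assert that ``a single $2$-cell's attaching map is a homeomorphism onto its image in a combinatorial complex,'' and you use this to rule out the case where the minimal disc diagram $E$ for $\partial R$ is a single $2$-cell. This assertion is false, and it is in fact precisely the content of the corollary you are trying to prove. In a general combinatorial $2$-complex a $2$-cell may be attached along a non-injective path (the presentation complex of $\langle a,b \mid aba^{-1}b^{-1}\rangle$ is the standard example). Once that claim is removed, your setup collapses: a minimal-area disc diagram $E$ with boundary cycle $\partial R$ can simply be the single $2$-cell $R$ itself, in which case Theorem~\ref{thm:fan-classification} says nothing and no contradiction is available. Relatedly, your later escape route (``cancel $R'$ out of $E$ against the boundary, contradicting minimal area, unless $E$ was a single $2$-cell'') returns to the same false claim to dispose of the single-$2$-cell case.

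The fix is to not feed $\partial R$ itself into the diagram. Instead, suppose $\partial R\to X$ fails to be injective, pick two distinct $0$-cells of the domain circle mapping to the same vertex of $X$, and let $P$ be the shorter closed subpath between them, so that $P$ is nontrivial, cyclically reduced, and $\size{P}\le \tfrac12\size{\partial R}<\size{\partial R}$. Take a minimal-area reduced diagram $E$ for $P$ and apply Theorem~\ref{thm:fan-classification}. The spur case is ruled out because $P$ is (cyclically) reduced. If $E$ is a single $2$-cell $R'$, then the whole boundary cycle $\partial R'$ is a subpath of $\partial R$ of length $<\size{\partial R}$, hence $R'\ne R$ and $\partial R'$ is a piece on $R'$, giving $\size{\partial R'}<\tfrac16\size{\partial R'}$ --- absurd. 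Otherwise $E$ has an $i$-shell $R'$ with $i\le 3$ whose outer path $Q$ lies on $P$ and satisfies $\size{Q}>\bigl(1-\tfrac{i}{6}\bigr)\size{\partial R'}\ge\tfrac12\size{\partial R'}$. If $Q$ were a piece (which happens in particular whenever $R'$ and $R$ are distinct $2$-cells of $X$, since $Q$ lies on $\partial R$), the $C'(1/6)$ bound would give $\size{Q}<\tfrac16\size{\partial R'}$, a contradiction. If $Q$ is not a piece, then $R'$ and $R$ agree compatibly, so $\size{\partial R'}=\size{\partial R}$ and $\size{Q}>\tfrac12\size{\partial R}\ge\size{P}\ge\size{Q}$, again a contradiction. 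It is exactly the inequality $\size{P}\le\tfrac12\size{\partial R}$, available only because you passed to a proper subpath, that closes the case you could not handle; without it the argument is unfinishable. Your instinct that the crux is ``the outer path comprises more than half of $\partial R'$'' is right, but that fact only yields a contradiction once the boundary path has been cut down to length at most $\tfrac12\size{\partial R}$.
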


\subsection{Missing shells, and Quasi-isometric Embedding Criterion}

\begin{figure}\centering
\includegraphics[width=.5\textwidth]{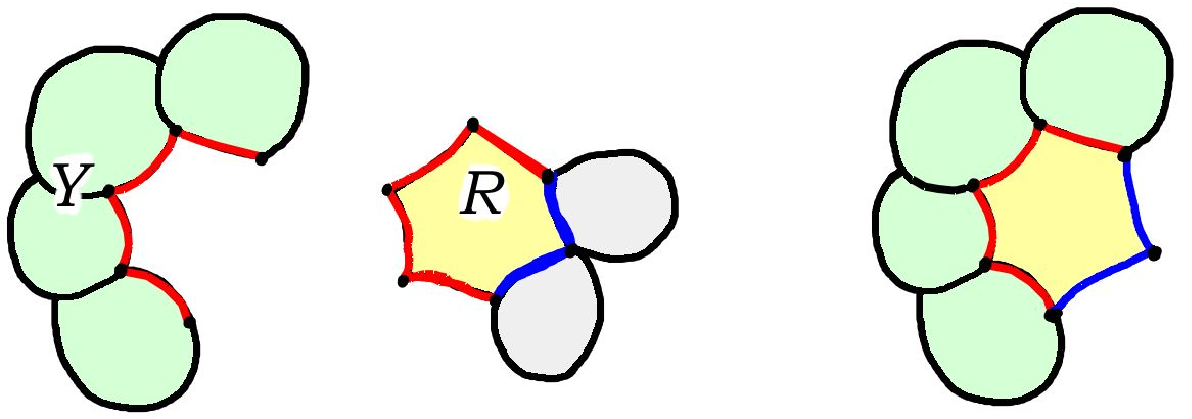}
\caption{A missing $2$-cell $R$ is attached to the subspace $Y$.\label{fig:MissingShell}}
\end{figure}

\begin{defn}[Missing $i$-shell]
Let $X$ be a 2-complex, and $Y$ a subcomplex of $X$. A $2$-cell $R$ of $X$ is a \emph{missing $i$-shell of $Y$} if
$\partial R=QS$ where $Q$ is a path in $Y$, $S$ is the concatenation of at most $i$-pieces of $X$, and $R$ is not contained in $Y$.
The paths $Q$ and $S$ are  the \emph{outer path} and \emph{inner path} of the missing shell $R$ respectively. See Figure~\ref{fig:MissingShell}.
\end{defn}

\begin{defn}[Quasi-isometric Embedding]
Let $X$ be a connected 2-complex. A \emph{geodesic} between the 0-cells $u,v$ is a path $P\rightarrow X$ 
of minimal length among all possible paths between $u$ and $v$.

Let $Y$ be a connected subcomplex of $X$, and let $L$ be a positive constant. The inclusion $Y \rightarrow X$ is an \emph{$L$-quasi-isometric embedding}
if for any pair of 0-cells $u,v$ of $Y$ and any pair of geodesics  $P_1\rightarrow Y$  and  $P_2\rightarrow X$ between $u$ and $v$, we have
$|P_1| \leq L|P_2|$.
\end{defn}

\begin{lem}\label{lem:ladder-area}
Let $D$ be a ladder with no 2-shells, let  $P_1$ and $P_2$ be the boundary paths of $D$,  and let $L>0$ be an integer such that $|\partial R|<L$ for each $2$-cell $R \subset D$.  Then $\area (D) \leq |P_2|$ and $|P_1| \leq L |P_2|$.
\end{lem}
\begin{proof}
Since $D$ is a ladder, different pieces have disjoint interiors, and the boundary of each 2-cell of $D$ contains at most two pieces. 
Moreover,  if the boundary of a 2-cell $R$ contains at most one piece, then $\partial R$  intersects both boundary paths in non-trivial subpaths.
If the boundary of a 2-cell $R$ contains two pieces, then either $R$ is a 2-shell or $\partial R$ intersects both boundary paths in non-trivial subpaths.

Since $D$ has no 2-shells, the boundary of each 2-cell $R$ of $D$ intersects  both $P_1$ and $P_2$ in non-trivial subpaths, 
and therefore $\area (D) \leq |P_i|$ for $i=1,2$.  For the second inequality, 
observe that each 1-cell of $P_1$ either belongs to $P_2$ or is contained in the boundary of a 2-cell of $D$, therefore
 \[ |P_1| \leq |P_2| + (L-1) \area (D) \leq L|P_2|.  \qedhere  \]
\end{proof}

The following is a variation of the quasiconvexity criterion in \cite{McWi-coherence}:

\begin{prop}[Quasi-isometric Embeddedness Criterion] \label{lem:No-shells-convexity}
Let $X$ be a $C'(1/6)$ 2-complex that is simply-connected, and suppose that there $L>0$ such that $|\partial R|<L$ for each $2$-cell $R \subset X$.
Let  $Y$ be a connected subcomplex of $X$ with no missing $3$-shells.
Then the inclusion $Y \rightarrow X$ is a $L$-quasi-isometric embedding.
\end{prop}
\begin{proof}
Let $P_1\rightarrow Y$  and  $P_2\rightarrow X$ be geodesics with the same endpoints. Let $D\rightarrow X$ be a reduced disc diagram with boundary cycle $P_1P_2^{-1}$, see Figure~\ref{fig:LadderQuasiconvexity}. We point out three observations and then we conclude:

\emph{If $R \hookrightarrow D$ is an $i$-shell of $D$ with $i\leq 3$, then the outer path of $R$ intersects $P_1$ and $P_2$ in non-trivial subpaths. }  
Indeed, the outer path of $R$  cannot be a subpath of $P_2$ since this would contradict that $P_2$ is a geodesic in $X$ - since the inner path of an $i$-shell is shorter than the outer path when $i\leq 3$.  Similarly, that $Y$ has no missing $i$-shells for $i\leq 3$ implies that the outer path of $R$  cannot be a subpath of $P_1$, since otherwise $R$ would also lie in $Y$ thus ensuring that $P_1$ can be shortened in $Y$.  

\emph{If a 1-cell $e$ is a spur in $\partial D$, then $e$ is a common 1-cell of $P_1$ and $P_2$ located either at the start point or end point of the paths $P_1,P_2$.}
Since $P_1$ and $P_2$ are geodesics with the same endpoints, these paths do not backtrack 1-cells. Therefore spurs can be only at the  common start point of $P_1$ and $P_2$, or at the common end point of $P_1$ and $P_2$.

\emph{The diagram $D$ is a  single 0-cell, a single 2-cell,  or a ladder with no 2-shells.}
Indeed, since $D$ is a $C'(1/6)$ disc diagram with at most two spurs and/or $i$-shells with $i\leq 3$, this follows directly from Theorem~\ref{thm:fan-classification}.

\emph{Conclusion.}
If $D$ is a single $2$-cell or a single $0$-cell, then clearly $|P_1|\leq L|P_2|$. Otherwise, $D$ is a ladder with no 2-shells, and then Lemma~\ref{lem:ladder-area} implies that $|P_1| \leq L|P_2|$.
\end{proof}

\begin{figure}\centering
\includegraphics[width=.45\textwidth]{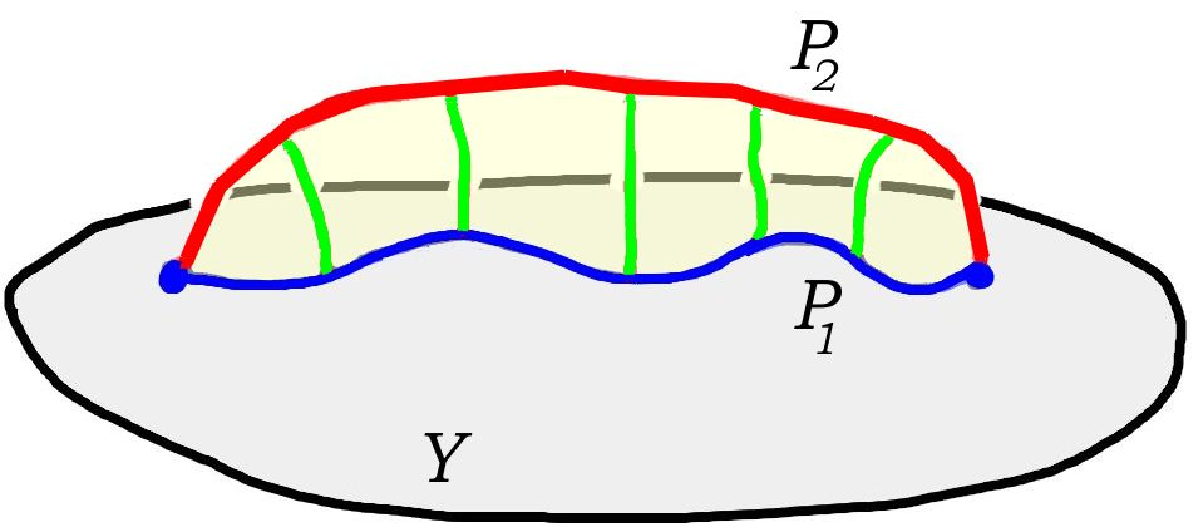}
\caption{A minimal area disc diagram between a geodesic $P_2$ and a space $Y$ with no missing shells,  is a ladder.\label{fig:LadderQuasiconvexity}}
\end{figure}

\section{Quasiconvexity and Perimeter-reduction}\label{sec:lq-criteria}

Given a simply-connected combinatorial 2-complex $X$, we provide a criterion for verifying that all (relatively) finitely generated subgroups of $\aut (X)$ have quasiconvex orbits. If the group acts freely and cocompactly, this coincides with a criterion from~\cite{McWi-coherence} to determine local quasiconvexity
for small cancellation groups; our approach extends some of those techniques.

\begin{defn}[Circumscribed]
A $2$-complex $X$ is \emph{$L$-circumscribed} if there exists an integer $L$ such that
for each $2$-cell $R$ of $X$, the boundary cycle $\partial R$ has length at most $L$.
We say that $X$ is \emph{uniformly circumscribed} if $X$ is $L$-circumscribed for some $L$.
\end{defn}

\begin{defn}[Thinness]
A $2$-complex $X$  is \emph{thin} if $\sides_X (x)$ is a finite set for every $1$-cell $x$ in $X$.
If there exists an integer $M$ such that $|\sides_X (x)|\leq M$ for every $1$-cell $x$ in $X$, then we say that $X$
is \emph{$M$-thin}.
All $2$-complexes considered in this paper are thin and most are $M$-thin for some $M$.
\end{defn}

\begin{thm}[Local Quasiconvexity]\label{thm:local-quasiconvexity}
Let $X$ be a $C'(\lambda)$ complex that is simply-connected, uniformly circumscribed, and $M$-thin.
Suppose that $6\lambda M < 1$.

If $\mc H<\aut (X)$ is finitely generated relative to a finite collection of $0$-cell stabilizers. 
Then there exists a connected and quasi-isometrically embedded $\mc H$-cocompact subcomplex of $X$.
\end{thm}

\begin{rem}
When $\aut (X)$ acts without inversions on  $X^1$, then Theorem~\ref{thm:local-quasiconvexity} holds under the weaker hypothesis $3\lambda M<1$.
See Remark~\ref{rem:no-inversions}.

We could develop parallel $C(4)$-$T(4)$ results where  $\leq2$-shells play the role of $\leq3$-shells etc.
And there are conditions that ensure perimeter reductions.
This was discussed in detail in \cite{McWi-coherence}.
\end{rem}

\begin{proof}[Proof of Theorem~\ref{thm:local-quasiconvexity} and description of the rest of the section.]
That $X$ is an $M$-thin simply-connected $C'(\lambda)$-complex with $6\lambda M <1$ imply that $X$ satisfies what we called the \emph{Perimeter-reduction hypothesis}. This hypothesis and the stated result are the main contents of Section~\ref{subsec:reduction-criterion}.

Then the main result of Section~\ref{subsection:lqc-criterion} states that  any $L$-circumscribed, thin and simply-connected $C'(\lambda)$-complex satisfying the perimeter-reduction hypothesis, satisfies the conclusion of Theorem~\ref{thm:local-quasiconvexity}.
\end{proof}

\subsection{Perimeter with respect to a group action}\label{subsec:perimeter}

The following Definition modifies the notation introduced in~\cite[Conv~2.7, Def~2.8, and Rem~2.9]{McWi-coherence}.

\begin{defn}[Sides] \cite[Def 2.8]{McWi-coherence} \label{def:side}   \label{defn:side-subcomplex}
Let $X$ be a $2$-complex, and let $R$ be a $2$-cell of $X$.  Let
$r$ be a $1$-cell in $\partial R$ and let $x$ be the image of $r$ in
$X$.  The pair $(R,r)$ is a \emph{side of a $2$-cell of
$X$ that is present at $x$}.  The collection of all sides of $X$
that are present at $x$ will be denoted by $\sides_X (x)$, and the
full collection of sides of $2$-cells of $X$ that are present at
$1$-cells of $X$ will be denoted by $\sides_X$.

Suppose that $Y$ is a subcomplex of $X$ and $(R,r)$ is a side of $X$
present at the $1$-cell $x$ of $X$. If $x$ is contained in $Y$ and
the map $(R,r) \rightarrow (X,x)$ factors through the inclusion $(Y,x) \hookrightarrow (X,x)$
then  \emph{the side $(R,r) \rightarrow (X,x)$ lifts to $Y$.}
The collection of all sides of $X$ that are present at $x$ and lift to $Y$
is denoted by $\sides_X (Y,x)$. The collection of sides of $X$ that are present at
$x$ and do not lift to $Y$ is denoted by $\missing_X (Y,x)$.

Notice that  if $x$ does not lift to $Y$ then $\sides_X (Y,x)$ is the empty set.
\end{defn}

\begin{defn}[$\mc H$-Cocompact subcomplex]
Let $X$ be a 2-complex, and $\mc H$ a subgroup of $\aut (X)$.
A subcomplex $Y \hookrightarrow X$ is  \emph{$\mc H$-cocompact } if
$Y$ is $\mc H$-invariant and  $\mc H$ acts cocompactly on $Y$.
\end{defn}

\begin{figure}\centering
\includegraphics[width=.6\textwidth]{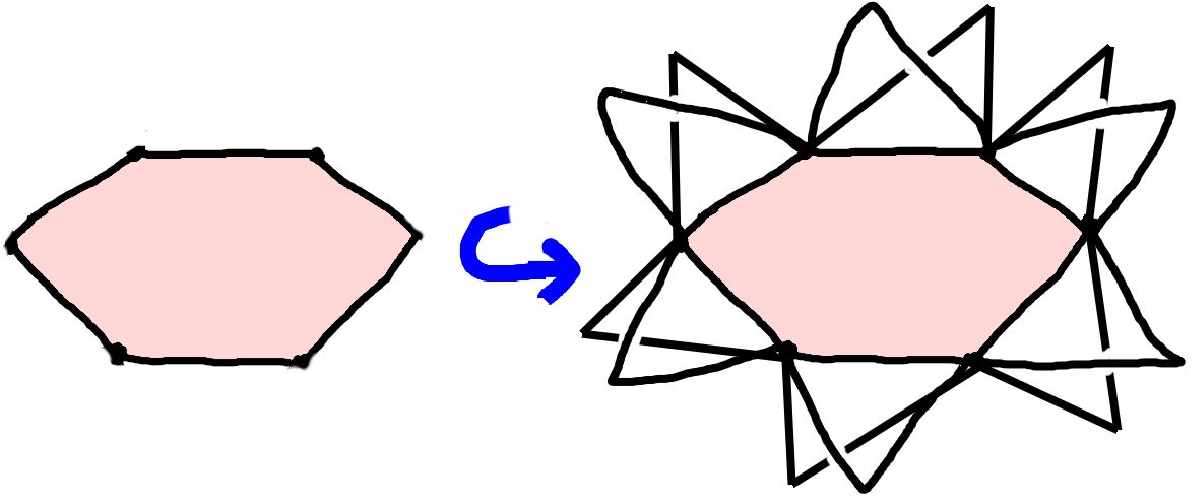}
\caption{ $\perimeter (Y, 1) = 12$, for $Y \hookrightarrow X$ and the trivial group.
Various actions of $\Z_2$ yield perimeters $6$, $8$, and $12$. \label{fig:ActionStar}}
\end{figure}

\begin{defn}[Perimeter of $\mc H$-Cocompact subcomplexes]\label{def:Perimeter}
Let $X$ be a thin 2-complex. Let  $\mc H$ be a subgroup of $\aut (X)$, and let
$Y$ be a $\mc H$-cocompact subcomplex of $X$.

By cocompactness, the action of $\mc H$ on $Y$ has finitely many $1$-cell orbits. Suppose there are $n$ orbits and let
$y_1, \dots , y_n$ be $1$-cells of $Y$ representing these orbits. Define the \emph{perimeter of $Y$ with respect to $\mc H$} to be:
\begin{equation}\label{eq:perimeter definition}
  \perimeter (Y, \mc H) = \sum_{i=1}^n |\missing_X (Y, y_i)|
\end{equation}
See Figure~\ref{fig:ActionStar}.
 \end{defn}
We note that Definition~\ref{def:Perimeter} is a slight modification of \cite[Def~2.10]{McWi-coherence} that allows us to deal with subcomplexes admitting cocompact actions.

\begin{lem}\label{lem:finite-perimeter}
Let $X$ be a thin 2-complex. Let $\mc H$ be a subgroup of $\aut (X)$, and let
$Y$ be a $\mc H$-cocompact subcomplex of $X$. Then the perimeter $\perimeter (Y , \mc H)$ is a well-defined non-negative integer.
\end{lem}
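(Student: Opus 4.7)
The claim has two parts: that the sum is independent of the choice of orbit representatives, and that the sum is finite.

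For well-definedness, the plan is to verify that $|\missing_X(Y,y)|$ is constant along the $\mc H$-orbit of a $1$-cell $y$ of $Y$. Given $h \in \mc H$, the automorphism $h \colon X \to X$ is cellular, so it permutes $2$-cells and $1$-cells, and it sends sides of $2$-cells present at $y$ bijectively to sides present at $hy$, in a way that respects boundary cycles. Because $h$ preserves $Y$ (as $Y$ is $\mc H$-invariant), a side $(R,r)$ present at $y$ lifts to $Y$ if and only if $h \cdot (R,r) = (hR, hr)$ lifts to $Y$. Thus $h$ restricts to a bijection $\missing_X(Y,y) \to \missing_X(Y,hy)$, showing that $|\missing_X(Y,y)|$ depends only on the $\mc H$-orbit of $y$. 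Consequently the sum in~\eqref{eq:perimeter definition} depends only on the set of orbits, not on the chosen representatives.

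For finiteness, observe that $\missing_X(Y, y_i) \subseteq \sides_X(y_i)$ by Definition~\ref{def:side}, and the thinness hypothesis on $X$ asserts that $\sides_X(y_i)$ is a finite set; hence each summand $|\missing_X(Y, y_i)|$ is a non-negative integer. The $\mc H$-cocompactness of $Y$ ensures the number of $1$-cell orbits $n$ is finite, so the sum is a finite non-negative integer.

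Neither step is hard; the only point requiring some care is checking that $h$ really does send $\missing_X(Y, y)$ into $\missing_X(Y, hy)$, which amounts to unpacking the factorization condition in Definition~\ref{defn:side-subcomplex} and using that $h$ is a cellular automorphism fixing $Y$ setwise.
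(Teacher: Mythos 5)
Your proof is correct and matches the paper's argument: the paper likewise observes that thinness makes each summand a finite non-negative integer and that $h\in\mc H$ induces a bijection $\missing_X(Y,y)\leftrightarrow\missing_X(Y,h.y)$, which together with cocompactness gives well-definedness and finiteness. You simply spell out the bijection step in a bit more detail than the paper does.
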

\begin{proof}
Since $X$ is thin, the sum in Equation~\eqref{eq:perimeter definition} involves only non-negative integers and hence $\perimeter (Y , \mc H)$ is a non-negative integer. The sum is well-defined because
there is a bijection $\missing_X (Y, y) \leftrightarrow \missing_X (Y, h.y)$
for each $h\in \mc H$ and $1$-cell $y$ of $Y$.
\end{proof}

\subsection{The perimeter-reduction criterion theorem:
} \label{subsec:reduction-criterion}

\begin{defn}[Perimeter-reduction hypothesis]
A thin $2$-complex $X$ satisfies the \emph{Perimeter-reduction hypothesis} if the following property holds:
For any subgroup $\mc H$ of $\aut (X)$ that is finitely generated relative to a finite collection of $0$-cell stabilizers,
and any  connected $\mc H$-cocompact subcomplex $Y \subset X$ with a missing $3$-shell,
there is a connected $\mc H$-cocompact subcomplex $Y' \subset X$ with $\perimeter (Y', H) < \perimeter (Y, H)$.
\end{defn}

\begin{thm}[Perimeter-reduction criterion]\label{thm:pr-criteria}
Let $X$ be a $C'(\lambda)$ complex that is simply-connected, $M$-thin, and satisfies that $6\lambda M < 1$.
Then $X$ satisfies the Perimeter-reduction hypothesis.
\end{thm}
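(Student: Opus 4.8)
The plan is to start with a connected $\mc H$-cocompact subcomplex $Y \subset X$ that has a missing $3$-shell $R$, and to build a new connected $\mc H$-cocompact subcomplex $Y'$ by attaching $2$-cells to $Y$ in an $\mc H$-equivariant way, arranged so that the total perimeter strictly decreases. First I would fix a missing $i$-shell $R$ with $i \le 3$, say $\partial R = QS$ with $Q \to Y$ and $S$ a concatenation of at most three pieces, and then set $Y' = \mc H \cdot (Y \cup R)$, the smallest $\mc H$-invariant subcomplex containing $Y$ and $R$. Since $\mc H$ acts cocompactly on $Y$ and $R$ has boundary of length bounded by the circumscription constant $L$, only finitely many $\mc H$-orbits of new $1$-cells and $2$-cells are introduced, so $Y'$ is still connected (it contains $Y$) and $\mc H$-cocompact, hence $\perimeter(Y', \mc H)$ is a well-defined non-negative integer by Lemma~\ref{lem:finite-perimeter}.

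The heart of the argument is the bookkeeping comparing $\perimeter(Y',\mc H)$ with $\perimeter(Y,\mc H)$. Attaching the orbit of $R$ has two competing effects on the sum $\sum_i |\missing_X(Y,y_i)|$: on the one hand, the side $(R,r)$ at each $1$-cell $r$ of $\partial R$ that previously lay in $\missing_X(Y,\cdot)$ now lifts to $Y'$, which \emph{decreases} the perimeter — in particular the side of $R$ itself at each of the (at most $3\lambda L$, roughly) $1$-cells of the inner path $S$, and at the $1$-cells of $Q$, gets removed from the ``missing'' count; on the other hand, a new $1$-cell $e$ of $R$ that was not in $Y$ is now in $Y'$, and every side of a $2$-cell of $X$ present at $e$ that does not lift to $Y'$ is a \emph{new} contribution to the perimeter. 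The key quantitative point is that the number of genuinely new $1$-cells contributed by $R$ is at most $|S| < i\lambda L \le 3\lambda L$ (the outer path $Q$ already lies in $Y$), each such $1$-cell carries at most $M$ sides by $M$-thinness, so the total increase is at most $3\lambda L M$ sides per orbit of $R$; meanwhile the decrease includes, at minimum, the side $(R,r)$ for each of the $|\partial R| = |Q| + |S| > |\partial R| - 3\lambda L \ge (1-3\lambda)|\partial R| \ge \tfrac12 |\partial R|$ many $1$-cells $r$ of $\partial R$ lying on $Q$ — wait, more carefully: one argues the decrease is at least (number of $1$-cells of $\partial R$ lying in $Y$ before) $= |Q| \ge |\partial R| - |S| > (1 - 3\lambda)|\partial R|$, and since $6\lambda M < 1$ gives $3\lambda M < \tfrac12 < 1 - 3\lambda M$... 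The clean inequality to target is: increase $\le 3\lambda M \cdot |\partial R|$ and decrease $\ge |Q| \ge (1-3\lambda)|\partial R| > 3\lambda M |\partial R|$, the last step using $6\lambda M < 1 \Rightarrow 3\lambda M < \tfrac12 \le 1-3\lambda$ (as $\lambda < \tfrac16$). This is where the precise hypothesis $6\lambda M < 1$ is consumed, and where inversions force the factor $6$ rather than $3$, since an inversion can double-count a side.

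I expect the main obstacle to be the careful equivariant accounting: one must verify that passing to the $\mc H$-orbit of $R$ does not, via overlaps between $hR$ and $h'R$ for distinct cosets $h\mc H_R, h'\mc H_R$ (where $\mc H_R = \stabilizer_{\mc H}(R)$), inadvertently create a net \emph{increase} by adding $1$-cells while the ``compensating'' decrease is shared among several translates. The safe way is to do the count one $\mc H$-orbit of $2$-cells at a time, choosing orbit representatives of the new $1$-cells of $Y'$ among the $1$-cells of $R$, and to note that $\missing_X(Y',e) \subseteq \missing_X(Y,e)$ for every $1$-cell $e$ of $Y$ — so the perimeter can only drop at old $1$-cells — while new $1$-cells contribute at most $M$ each and there are at most $|S| < 3\lambda|\partial R|$ orbits of them arising from $R$. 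A secondary subtlety is ensuring $Y'$ is connected: this is immediate because $Q \subset Y$ meets $R$, so $Y \cup R$ is connected and its $\mc H$-orbit is connected as $Y$ already is. Once the inequality $\perimeter(Y',\mc H) \le \perimeter(Y,\mc H) - 1$ is established, the conclusion — that $X$ satisfies the Perimeter-reduction hypothesis — follows by definition, completing the proof; the relative-finite-generation hypothesis on $\mc H$ is used only to guarantee $\mc H$-cocompactness is preserved, which is already built into the choice of $Y'$ as a finite union of $\mc H$-orbits added to $Y$.
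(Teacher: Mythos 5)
Your global strategy matches the paper: set $Y'$ to be the $\mc H$-orbit of $Y\cup R$ (the ``$(\mc H,R)$-enlargement''), verify it is connected and $\mc H$-cocompact, and then compare perimeters by counting new $1$-cells against newly-present sides. The increase estimate ``at most $|S|<3\lambda|\partial R|$ new $1$-cell orbits, each carrying at most $M$ sides'' is exactly the paper's Equation~\eqref{eq:inversion-remark}. But the decrease side of your bookkeeping has a genuine gap.

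You assert that the decrease is at least $|Q|$, one per $1$-cell of $Q$. This double-counts. The perimeter is a sum over $\mc H$-\emph{orbits} of $1$-cells, and two $1$-cells $r_1,r_2$ of $Q$ that lie in the same $\mc H$-orbit via an element $h\in\stabilizer_{\mc H}(R)$ produce the \emph{same} side after enlargement (namely $(R,r_1)=(h^{-1}R,r_1)$), so they contribute only once to the decrease. The correct bound, which is what the paper's Lemma~\ref{lem:counting-sides} establishes, is
\[
\sum_{e\in E}|\added(e)|\ \geq\ \frac{|\partial R|}{|\aut_{\mc H}(R)|},
\]
and the required inequality then becomes $3\lambda M < 1/|\aut_{\mc H}(R)|$. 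This is the actual point at which $6\lambda M<1$ (rather than $3\lambda M<1$) is consumed: it is exactly the case $|\aut_{\mc H}(R)|=2$, i.e.\ $\aut_{\mc H}(R)$ generated by a reflection of $\partial R$. Your parenthetical about ``inversions doubling-counting a side'' gestures at something, but it is not the same mechanism — in the paper the role of inversions appears only in the sharpened \emph{increase} estimate of Remark~\ref{rem:no-inversions}, not in the decrease bound.

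There is a second, independent gap: your argument silently assumes $|\aut_{\mc H}(R)|\le 2$. When $\aut_{\mc H}(R)$ contains a nontrivial rotation, $|\aut_{\mc H}(R)|$ can be large and the quantitative estimate fails outright. The paper handles this with a separate, soft argument (Lemma~\ref{lem:3.14159}, ``Entire Circle''): since $|Q|>|\partial R|/2$ and a nontrivial rotation's orbit of an arc of length more than half the circumference covers the whole circle, the $\mc H$-translates of $Q$ already contain all of $\partial R$, so $Y'$ and $Y$ share the same $1$-skeleton and the perimeter cannot increase. Without this case split, and without replacing your ``decrease $\geq|Q|$'' by the $\frac{|\partial R|}{|\aut_{\mc H}(R)|}$ bound (with an honest counting argument modulo $\stabilizer_{\mc H}(R)$), the proof does not close.
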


Theorem~\ref{thm:pr-criteria} is an immediate consequence of Proposition~\ref{prop:perimeter} whose proof
is the goal of this section.

\begin{defn}[$\mc H$-enlargement] \label{defn:packed-filling}
Let $X$ be a 2-complex, let $Y$ be a subcomplex of $X$, let $R$ be a 2-cell of $X$, and let $\mc H < \aut (X)$.
The  \emph{$(\mc H, R)$-enlargement of $Y$} is the subcomplex $Y'$ of $X$ obtained by adding all $\mc H$-translates of $R$  as follows:
\[   Y' = Y \cup  \bigcup_{h \in H} h.R ,\]
\end{defn}
\begin{prop}[$(R,\mc H)$-enlargement reduces perimeter]\label{prop:perimeter}
Let $X$ be a $C'(\lambda)$ complex that is simply-connected, $M$-thin, and satisfies $6\lambda M < 1$.

Let $\mc H<\aut (X)$, let $Y\subset X$ be $\mc H$-cocompact, and let $R \subset X$ be a missing $3$-shell of $Y$.
Then the $(\mc H, R)$-enlargement $Y'$ of $Y$ satisfies:
\begin{equation}\label{ineq:perimeter}
 \perimeter (Y', \mc H) <  \perimeter (Y, \mc H) .
\end{equation}
\end{prop}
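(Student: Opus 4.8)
The plan is to compare the perimeter contributions cell-by-cell, tracking what happens to $\missing_X(Y,x)$ and $\missing_X(Y',x)$ as $Y$ is enlarged to $Y'$ by adding all $\mc H$-translates of the missing $3$-shell $R$. First I would set up bookkeeping: choose $1$-cell orbit representatives for the action of $\mc H$ on $Y'$ and compare them with representatives for $Y$. Every $1$-cell of $Y$ is a $1$-cell of $Y'$, and $Y'$ has at most finitely many new $1$-cell orbits coming from $\partial R$; in particular $Y'$ is again $\mc H$-cocompact, so $\perimeter(Y',\mc H)$ is defined by Lemma~\ref{lem:finite-perimeter}. The key monotonicity observation is that for a $1$-cell $x$ lying in $Y$, a side present at $x$ that lifts to $Y$ also lifts to $Y'$, so $\missing_X(Y',x)\subseteq \missing_X(Y,x)$; enlarging can only remove missing sides, never create them, at old $1$-cells.

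The heart of the argument is the gain/loss accounting along $\partial R=QS$. Adding $R$ (and its translates) attaches the inner path $S$ to the complex. Along the outer path $Q\subset Y$, the side $(R,\text{edge of }Q)$ was previously missing (this is what it means for $R$ to be a missing shell, after checking it does not already lift — here I would invoke that $R\not\subseteq Y$ together with $C'(1/6)$ so that $\partial R$ embeds, Corollary~\ref{cor:boundary cycles embed}), and now it lifts to $Y'$; this is a strict decrease, and by $\mc H$-equivariance it is a decrease of at least $|Q|\geq 1$ counted over orbit representatives, with multiplicity handled by the bijections $\missing_X(Y,y)\leftrightarrow\missing_X(Y,h.y)$ of Lemma~\ref{lem:finite-perimeter}. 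The potential \emph{increase} comes from the new $1$-cells along $S$ that were not in $Y$: each such new $1$-cell $e$ can contribute up to $|\sides_X(e)|\le M$ missing sides, but the side of $R$ itself at $e$ does lift to $Y'$, so $e$ contributes at most $M-1$; and $S$ is a concatenation of at most $3$ pieces, so $|S|<3\lambda|\partial R|$. A subtlety that must be confronted is overcounting: distinct $\mc H$-translates $h.R$ may share $1$-cells or sides, and a new $1$-cell of $S$ may coincide with an old $1$-cell of $Y$ (in which case it falls under the monotonicity case above, not the increase case). The conservative bound is therefore: perimeter increase at new $1$-cells is at most $(M-1)\,|S| < (M-1)\cdot 3\lambda|\partial R|$, while perimeter decrease from the outer path is at least $|Q| = |\partial R| - |S| > (1-3\lambda)|\partial R|$, all counted per $\mc H$-orbit.

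Combining, the net change in perimeter (per orbit of a translate of $R$) is bounded above by
\begin{equation*}
3\lambda(M-1)|\partial R| - (1-3\lambda)|\partial R| = \bigl(3\lambda M - 1\bigr)|\partial R|,
\end{equation*}
and since $6\lambda M<1$ gives in particular $3\lambda M<1$, this is strictly negative, yielding $\perimeter(Y',\mc H)<\perimeter(Y,\mc H)$. The reason the hypothesis is stated as $6\lambda M<1$ rather than $3\lambda M<1$ is the possibility of inversions in the action on $X^1$ (cf. the remark following Theorem~\ref{thm:C}): an element of $\mc H$ may invert a $1$-cell of $\partial R$, effectively identifying sides in pairs and forcing the worst-case factor $2$ into the side-counting; without inversions the sharper $3\lambda M<1$ suffices. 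I expect the main obstacle to be precisely this equivariant side-counting near $\partial R$ — making the overcounting estimates rigorous when translates of $R$ overlap each other or overlap $Y$ in $1$-cells, and correctly handling the inversion case — rather than the final arithmetic, which is immediate once the local estimate is in hand.
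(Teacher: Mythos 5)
Your broad strategy matches the paper's — you compare the perimeter decrease along the outer path $Q$ against the increase along the inner path $S$, using $|S|<3\lambda|\partial R|$ and the $M$-thinness bound. But there is a genuine gap in the decrease estimate, and it is precisely the point where the paper has to work hardest.

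Your claim that the decrease ``is a decrease of at least $|Q|\geq 1$ counted over orbit representatives'' is not correct when the stabilizer $\stabilizer_{\mc H}(R)$ acts nontrivially on $\partial R$. Consider two edges $e_1,e_2$ of $Q$ in the same $\mc H$-orbit, say $e_2=h.e_1$ with $h\in\stabilizer_{\mc H}(R)$. After translating to a common orbit representative $y$, the newly-present sides $(R,e_1)$ and $(R,e_2)$ can become \emph{the same} side of $Y'$ at $y$, so the decrease at $y$ is $1$, not $2$. The paper formalizes exactly this: Lemma~\ref{lem:counting-sides} shows the added-side count at an orbit representative is at least $m_e/|\aut_{\mc H}(R)|$, where $\aut_{\mc H}(R)=\stabilizer_{\mc H}(R)/\fixer_{\mc H}(R)$, and summing over orbits gives a decrease of at least $|\partial R|/|\aut_{\mc H}(R)|$ rather than $|\partial R|$ (see Equation~\eqref{eq:added}). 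You locate the needed factor of $2$ in ``inversions of a $1$-cell in $X^1$,'' but the actual mechanism in the proof is the size of the $2$-cell stabilizer $\aut_{\mc H}(R)$; $1$-cell inversions play only a secondary role (in the sharpening of Remark~\ref{rem:no-inversions}).

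More seriously, $|\aut_{\mc H}(R)|$ can be arbitrarily large — e.g.\ a full cyclic rotation group of the $2$-cell, as actually happens in the coned-off Cayley complex application where each $2$-cell has $\integers_m$ stabilizer — and then the corrected decrease $|Q|/|\aut_{\mc H}(R)|$ is far too small to beat the increase, so no version of your arithmetic closes. The paper therefore splits into two cases. If $\aut_{\mc H}(R)$ contains a rotation, Lemma~\ref{lem:3.14159} (``Entire Circle'') shows that because $|S|<|Q|$, the $\stabilizer_{\mc H}(R)$-translates of $Q$ cover all of $\partial R$, hence $\partial R\subset Y$, so $Y'$ has the same $1$-skeleton as $Y$ and the perimeter drops strictly with no increase term at all. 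Only in the remaining case — $\aut_{\mc H}(R)$ trivial or generated by a reflection, so $|\aut_{\mc H}(R)|\leq 2$ — does the paper run the quantitative estimate (Lemma~\ref{lem:perimeter-estimation}), and there the hypothesis $6\lambda M<1$ is exactly what is needed to absorb the worst-case $\frac{1}{|\aut_{\mc H}(R)|}=\frac12$. Your proof needs this case division and the $|\aut_{\mc H}(R)|$-aware side-count to be correct; without it the argument fails whenever $R$ has a nontrivial rotational stabilizer in $\mc H$.
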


{\bf Plan:} The proof is divided into two cases depending upon the group $\aut_{\mc H}(R)$ defined below.
When $\aut_{\mc H}(R)$ is a large subgroup (of the dihedral group $\aut(R)$) then  Proposition~\ref{prop:perimeter} is obvious
as we show below that  no new 1-cells are added. The main part of the proof is in the case where $\aut_{\mc H}(R)$ is either trivial or generated by a reflection.
This case requires a computation showing that the perimeter decreases.  The proof of Proposition~\ref{prop:perimeter} is discussed after the following three lemmas. 

\begin{defn}
Let $\mc H<\aut (X)$, and let $R \subset X$ be a $2$-cell.  We define $\aut_{\mc H}(R)$ to be the following quotient group: \[\aut_{\mc H}(R) = \stabilizer_{\mc H}(R)/ \fixer_{\mc H}(R)\] where the first group is the usual stabilizer of $R$  in ${\mc H}$ and the second is the point-wise stabilizer of $R$ in ${\mc H}$. When all boundary cycles of $2$-cells are embedded (as is the case when $X$ is a simply-connected $C'(1/6)$-complex by Corollary~\ref{cor:boundary cycles embed}), there is a natural classification of elements of $\aut_{\mc H}(R)$ as rotations or reflections. In particular, if $\aut_{\mc H}(R)$ has no rotations then it is either trivial or is generated by a reflection.
\end{defn}

\begin{lem}[Entire Circle]\label{lem:3.14159}
Let $Y\subset X$, and let $R$ be a 2-cell of $X$ with $\partial R=QS$ and $Q \subset Y$.
If $|S|<|Q|$ and $\aut_{\mc H}(R)$ contains a nontrivial rotation, then $\partial R$ lies inside $Y$.
\end{lem}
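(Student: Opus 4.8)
The plan is to exploit the fact that a nontrivial rotation in $\aut_{\mc H}(R)$ cyclically permutes the $1$-cells of $\partial R$, and that $\partial R$ is embedded (since $X$ is simply-connected $C'(1/6)$, by Corollary~\ref{cor:boundary cycles embed}), so that the rotation acts on the set of $1$-cells of the embedded circle $\partial R\subset X$ by a genuine cyclic rotation of order $k$ dividing $|\partial R|$. First I would observe that $\stabilizer_{\mc H}(R)$ acts on $\partial R\subset X$, and any element representing a nontrivial rotation in the quotient $\aut_{\mc H}(R)$ moves each $1$-cell of $\partial R$ to a distinct $1$-cell of $\partial R$; in particular such an element has a nontrivial cyclic action, so some power $g$ of it acts as rotation by a single ``notch'', or at least as a rotation of prime order $p\geq 2$. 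The key point is that the $\mc H$-translates $\{h.Q : h\in\stabilizer_{\mc H}(R)\}$ are all subpaths of $\partial R$ lying inside $Y$ (since $Q\subset Y$ and $Y$ need not be $\mc H$-invariant a priori — wait, here $Y$ is only assumed a subcomplex, not $\mc H$-cocompact — so I must be careful: the relevant translates are by elements of $\stabilizer_{\mc H}(R)$, and for those $h.Q$ is a path in $h.Y$, not necessarily in $Y$).

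Re-examining, the honest statement is: the rotation $\rho$ (a specific element of $\stabilizer_{\mc H}(R)$ projecting to a nontrivial rotation) carries the $1$-cells of $\partial R$ to $1$-cells of $\partial R$ via a cyclic permutation of some order $k\geq 2$. So I would instead argue directly with the combinatorics of the cycle $\partial R=QS$ rather than with translates of $Y$. Write $\partial R = q_1 q_2\cdots q_a s_1\cdots s_b$ where $q_i$ are the $1$-cells of $Q$ and $s_j$ those of $S$, with $a=|Q|>|S|=b$. The rotation $\rho$ of $\partial R$ is rotation by some amount $0<d<a+b$. If every translate $\rho^t\cdot Q$ ($t=0,1,\dots$) were to contain some $1$-cell of $S$, then every arc of $a$ consecutive $1$-cells in the cyclic order on $\partial R$ would meet the $b$-element set $\{s_1,\dots,s_b\}$; but since $a>b$, the arc of length $a$ starting right after $s_b$ (going $q_1,\dots$) and similarly suitable rotations — more precisely, the gaps between consecutive elements of $S$ in the cyclic order cannot all have length $<a$ if $a>b$ only when... no: the $Q$-arc itself is a gap of length $a\geq b+1>b$ between $s_b$ and $s_1$, so the set $\{s_1,\dots,s_b\}$ does have a gap of size $\geq a$.

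The clean argument: since $|Q|>|S|$, the embedded cycle $\partial R$ has the property that $Q$ is an arc of length $>\lfloor |\partial R|/2\rfloor$. A rotation $\rho$ of finite order $k\geq 2$ on a cycle of length $n=|\partial R|$ moves $Q$ to $\rho Q$, another arc of the same length; two arcs each of length $>n/2$ on a cycle of length $n$ must overlap. Thus $Q\cap\rho Q\neq\emptyset$, i.e. $\rho Q$ contains a $1$-cell of $Q$, hence a $1$-cell of $Y$; since $\rho\in\stabilizer_{\mc H}(R)$ but also $\rho$ may not preserve $Y$, I should rather conclude: $Q$ and $\rho^{-1}Q$ overlap, so $\rho(Q\cap\rho^{-1}Q)\subset\rho Q\cap Q$, and iterating, the union $\bigcup_{t} \rho^t Q$ is a union of arcs of length $>n/2$ covering $\partial R$ (consecutive translates overlap), whence $\bigcup_t\rho^t Q=\partial R$. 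But this only shows $\partial R$ is covered by $\mc H$-translates of $Q$, not that $\partial R\subset Y$. The resolution must be that the lemma is applied in a context where $Y$ is $\mc H$-cocompact and in particular $\stabilizer_{\mc H}(R)$-invariant (as in the proof of Proposition~\ref{prop:perimeter}, where $Y$ is $\mc H$-cocompact); under that hypothesis $\rho^t Q\subset\rho^t Y=Y$ for all $t$, so $\partial R=\bigcup_t\rho^t Q\subset Y$. I would either add the $\mc H$-invariance hypothesis explicitly or note it is available at the point of use. The main obstacle is precisely this: pinning down why the translates of $Q$ remain inside $Y$, which forces either an invariance hypothesis on $Y$ or a reformulation in terms of $\stabilizer_{\mc H}(R)\cdot Q$; everything else is the elementary ``two long arcs on a circle must meet'' observation together with the fact that a nontrivial rotation has finite order with all powers again rotations.
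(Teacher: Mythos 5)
Your argument is essentially the paper's own: the core observation is that an arc $Q$ of length more than half the circumference of the embedded circle $\partial R$ has the property that any nontrivial rotation carries it to an overlapping arc, whence the translates of $Q$ by powers of the rotation cover all of $\partial R$; the paper's proof reads almost verbatim as your ``clean argument'' paragraph. You also correctly flagged a real issue: for the translates $\rho^t Q$ to lie in $Y$ one needs $Y$ to be invariant under $\rho$ (or under $\stabilizer_{\mc H}(R)$), a hypothesis that the lemma as stated omits but that the paper's proof uses tacitly in the step ``the translates of $Q$ by elements of $\mc H$ cover $S$, hence $S\subset Y$.'' As you note, this is harmless at the point of application, where $Y$ is $\mc H$-cocompact and hence $\mc H$-invariant; but it should indeed be added to the lemma's hypotheses or recorded as part of the standing context. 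Your initial detour through a ``rotate by one notch'' element and the explicit gap-counting on $\{s_1,\dots,s_b\}$ is unnecessary and a bit tangled, but you discarded it in favor of the right argument, so the proof as a whole is sound and matches the paper's.
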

\begin{proof}
In any circle,  the translates of an arc of length more than half the circumference by the powers of a nontrivial rotation cover the entire circle.
Therefore the translates of $Q$ by elements of $\mc H$ cover $S$. Hence,
$S \subset Y$ and, in particular, $\partial R \rightarrow \subset Y$.
\end{proof}

\begin{lem}[Counting Sides]\label{lem:counting-sides}
Let $\mc H<\aut (X)$, let $Y\subset X$ be $\mc H$-cocompact, and let $R \subset X$ be a missing $3$-shell of $Y$.
Let $e$ be a $1$-cell of $\partial R$, let $\{e_1,\dots, e_{m_e}\}$ be all the $\mc H$-translates of $e$ in $\partial R$,
and let \[ \added(e) \ = \ \sides_X (Y', e) - \sides_X (Y, e).\]
Then
\begin{equation*}\label{eq:big fuss}
 |\added (e)| \geq   \frac{m_e}{|\aut_{\mc H}(R)|}.
\end{equation*}
\end{lem}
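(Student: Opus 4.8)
The plan is to track exactly which sides get added at the $1$-cell $e$ when we pass from $Y$ to the $(\mc H,R)$-enlargement $Y'$, and to exhibit at least $m_e/|\aut_{\mc H}(R)|$ of them. The key observation is that the $2$-cell $R$ itself contributes a side present at $e$, namely the pair $(R,r)$ where $r$ is the $1$-cell of $\partial R$ mapping to $e$; since $R$ is a \emph{missing} $3$-shell of $Y$, the $2$-cell $R$ is not contained in $Y$, and one checks — using that $\partial R$ embeds in $X$ by Corollary~\ref{cor:boundary cycles embed}, so that the side $(R,r)$ determines $R$ unambiguously — that this side does not lift to $Y$ but does lift to $Y'$. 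Hence $(R,r)\in\added(e)$. More generally, for each $\mc H$-translate $e_k$ of $e$ lying in $\partial R$, choosing $h_k\in\mc H$ with $h_k.e=e_k$, the side $(R, h_k^{-1}.r_k)$ (with $r_k\subset\partial R$ the $1$-cell over $e_k$) is a side present at $e$, it lifts to $Y'$, and — again because $R\not\subset Y$ — it does not lift to $Y$.

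First I would make precise the claim that distinct translates $e_k$ contribute sides of $R$ at $e$ that are genuinely distinct as elements of $\sides_X(Y',e)$, \emph{up to the ambiguity measured by} $\aut_{\mc H}(R)$. Two such sides $(R, r')$ and $(R, r'')$ coincide precisely when there is an automorphism of $R$ carrying $r'$ to $r''$ that is realized by an element of $\stabilizer_{\mc H}(R)$, and such an automorphism is trivial exactly when it lies in $\fixer_{\mc H}(R)$; so the number of \emph{distinct} sides of $R$ present at $e$ that arise this way is the size of the orbit of $r$ under the action of $\stabilizer_{\mc H}(R)$ on the $1$-cells of $\partial R$ mapping to $e$, and this orbit has size at least $m_e/|\aut_{\mc H}(R)|$ by the orbit–stabilizer counting: the $m_e$ translates $\{e_1,\dots,e_{m_e}\}$ are permuted by $\stabilizer_{\mc H}(R)$ with $\fixer_{\mc H}(R)$ acting trivially, so the number of $\stabilizer_{\mc H}(R)$-orbits on these $1$-cells times $|\aut_{\mc H}(R)|$ is at least $m_e$. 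Each such orbit yields one element of $\added(e)$, giving $|\added(e)|\ge m_e/|\aut_{\mc H}(R)|$.

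Second I would verify the two lifting assertions carefully. That the side $(R,r)\to(X,e)$ lifts to $Y'$ is immediate since $R\subset Y'$ by construction of the enlargement. That it does \emph{not} lift to $Y$ is where one uses the embedding of $\partial R$: if $(R,r)$ lifted through $(Y,e)\hookrightarrow(X,e)$, then the map $R\to X$ would factor through $Y$, forcing $R\subset Y$ (here we need that a side determines the $2$-cell, which follows from Definition~\ref{def:piece} together with boundary cycles embedding), contradicting that $R$ is a missing shell of $Y$. The main obstacle is purely bookkeeping: one must phrase the counting so that the $\stabilizer_{\mc H}(R)$-action on the $1$-cells of $\partial R$ over $e$, the induced $\aut_{\mc H}(R)$-action, and the identification of ``sides of $R$ present at $e$'' with these $1$-cells are all compatible — the embedding of $\partial R$ makes each of these clean, so no real subtlety remains once that is invoked. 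I would close by noting that $\added(e)$ could in principle contain further sides coming from other translates $h.R$ with $h\notin\stabilizer_{\mc H}(R)$, but since we only need a lower bound this is harmless.
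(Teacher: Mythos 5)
Your proof is correct and takes essentially the same route as the paper: you define the same map $e_k \mapsto (h_k^{-1}.R,\, e)$ from $\{e_1,\ldots,e_{m_e}\}$ into $\added(e)$ and bound its fibers by $|\aut_{\mc H}(R)|$ via the same orbit-count for the $\stabilizer_{\mc H}(R)$-action on $\{e_1,\ldots,e_{m_e}\}$, which factors through $\aut_{\mc H}(R)$ because $\fixer_{\mc H}(R)$ acts trivially on $\partial R$. The only quibbles are notational --- the side you write as $(R, h_k^{-1}.r_k)$ should be $(h_k^{-1}.R,\, e)$, the phrase ``the size of the orbit of $r$'' should read ``the number of $\stabilizer_{\mc H}(R)$-orbits on $\{e_1,\ldots,e_{m_e}\}$'' (as you in fact say immediately afterwards), and the appeal to Corollary~\ref{cor:boundary cycles embed} is unnecessary since a side is by definition a pair and so already determines its $2$-cell --- but the substance of the argument matches the paper's.
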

\begin{proof}
First notice that $\aut_{\mc H}(R)$ acts on $\{e_1,\dots, e_{m_e}\}$. Define a map $\{e_1,\dots, e_{m_e}\} \rightarrow \added (e)$ as follows. For each $e_i$ choose  $g_i \in \mc H$ such that $e = g_i.e_i$, and let the side $e_i$ map to the side $(g_i.R, e)$ in $\added (e)$.  Notice that if $e_i$ and $e_j$ map to the same side in $\added(e)$ then there is an element  $h\in \mc H$ such that $h(R,e_i)=(R,e_j)$, and, in particular, $h\in \stabilizer_{\mc H}(R)$.  It follows, if $e_i$ and $e_j$ map to the same side in $\added(e)$, then $e_i,\ e_j$ are in the same $\aut_{\mc H}(R)$-orbit. Therefore the preimage of each element of $\added (e)$ has cardinality at most $|\aut_{\mc H}(R)|$.
\end{proof}
\begin{rem}
For the interested reader, an exact computation of $\added (e)$ follows from a similar argument. The precise formula is given by:
\begin{equation*}
 |\added (e)| =  m_e \frac{ \big[ \stabilizer_{\mc H}(e)  : \fixer_{\mc H}(R) \big]}{\big[ \stabilizer_{\mc H}(R)   : \fixer_{\mc H}(R) \big] }
\end{equation*}
\end{rem}

\begin{lem}\label{lem:perimeter-estimation}
Let $\mc H<\aut (X)$, let $Y\subset X$ be $\mc H$-cocompact, and let $R \subset X$ be a missing $3$-shell of $Y$ with inner path $S$
and outer path $Q$, and let $Y'$ be the $(\mc H, R)$-enlargement of $Y$.
 If $|S|<3 \lambda |R|$ and $3  \lambda M  < \frac{1}{\aut_{\mc H}(R)}$, then:
\begin{equation}\label{eq:perimeter estimation}
 \perimeter (Y', \mc H) <  \perimeter (Y, \mc H) .
 \end{equation}
\end{lem}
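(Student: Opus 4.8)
The plan is to compare $\perimeter(Y',\mc H)$ and $\perimeter(Y,\mc H)$ by bookkeeping the contributions of each $\mc H$-orbit of $1$-cells to the two sums. Writing $\perimeter(Y,\mc H)=\sum_{i}|\sides_X(y_i)|-|\sides_X(Y,y_i)|$ over orbit representatives $y_i$ (using that $|\sides_X(y_i)|$ is orbit-invariant by $M$-thinness), the change in perimeter is governed entirely by the $1$-cells of $Y'$ that lie on the $\mc H$-orbit of $\partial R$, since away from $\mc H.\partial R$ the complexes $Y$ and $Y'$ agree and lifting of sides is unchanged. So the first step is to reduce the inequality to a sum over the $\mc H$-orbits of $1$-cells occurring in $\partial R=QS$. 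On such a $1$-cell $e$, enlarging from $Y$ to $Y'$ can only add sides (all $h.R$ are present), never remove them, so $|\missing_X(Y',e)|-|\missing_X(Y,e)|$ decreases by exactly $|\added(e)|$ from Lemma~\ref{lem:counting-sides} for those $e$ already in $Y$, while for $e\subset S$ not already in $Y$ the $1$-cell is newly added to $Y'$, contributing a fresh $|\missing_X(Y',e)|$ term that was absent before.

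Next I would set up the balance sheet precisely. Each $1$-cell of $Q$ is already in $Y$, and the outer path $Q$ embeds in $X$ (by Corollary~\ref{cor:boundary cycles embed}); the inner path $S$ is a concatenation of at most three pieces. The gain in perimeter comes from the $1$-cells of $S$ not lying in $Y$: each such orbit contributes at most $|\sides_X(e)|\le M$ to the new perimeter. Summing over the $\mc H$-orbits of $1$-cells in $S$, and using that the number of $1$-cells of $\partial R$ in a given orbit is $m_e$, the total gain is at most $\sum_{\text{orbits in }S}\frac{m_e}{m_e}\cdot(\text{number of such }1\text{-cells})\cdot\frac{M}{\,\cdot\,}$; more cleanly, the gain is at most $M$ times the number of $\mc H$-orbits of $1$-cells of $S$, which is at most $\frac{1}{m}\sum_{e\subset S}m_e \le \dots$ — the key point being $|S|<3\lambda|\partial R|$. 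The loss comes from Lemma~\ref{lem:counting-sides}: for each orbit of $1$-cells $e$ in $\partial R$ (in particular for those in $Q$), $|\added(e)|\ge m_e/|\aut_{\mc H}(R)|$, and summing over orbits this removes at least $\frac{|\partial R|}{|\aut_{\mc H}(R)|}$ many missing sides, with the caveat that some of these ``added'' sides may already have been counted. The clean way to organize this: the loss over all of $\partial R$ is at least (number of orbit-representatives among the $1$-cells of $\partial R$) $\times 1$, appropriately weighted, giving loss $\ge |\partial R|/|\aut_{\mc H}(R)|$ and gain $\le 3\lambda|\partial R|\cdot M < |\partial R|/|\aut_{\mc H}(R)|$ by the hypothesis $3\lambda M<1/|\aut_{\mc H}(R)|$.

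Concretely, the inequality I want is
\begin{equation*}
 \perimeter(Y,\mc H)-\perimeter(Y',\mc H) \ \ge\ \sum_{e}|\added(e)| \ -\ \sum_{e\subset S,\ e\notin Y} M \ >\ 0,
\end{equation*}
where the first sum runs over $\mc H$-orbit representatives $e$ of $1$-cells in $Q$ (these were in $Y$ already, so their missing-sides count strictly drops by $|\added(e)|\ge m_e/|\aut_{\mc H}(R)|$), and the second sum over orbit representatives of $1$-cells in $S\setminus Y$. Dividing $\partial R$ into its orbits, $\sum_e m_e$ over orbit reps of $Q$ equals $|Q|>|\partial R|/2$, so $\sum_e|\added(e)|\ge |Q|/|\aut_{\mc H}(R)| > \tfrac{|\partial R|}{2|\aut_{\mc H}(R)|}$; meanwhile the gain is at most $M\cdot|S| < 3\lambda M|\partial R| < |\partial R|/|\aut_{\mc H}(R)|$. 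To make these dominate each other I need to be a little careful to attribute each ``added'' side to the orbit in $Q$ rather than double-counting across $Q\cap S$ overlaps at the endpoints of $S$, but since $\partial R$ embeds the only shared $1$-cells lie in neither interior, and one handles them by a direct count.

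The main obstacle I expect is the bookkeeping at the interface between $Q$ and $S$ and the risk of double-counting: a side ``added'' at a $1$-cell $e$ of $\partial R$ might coincide, for a different $1$-cell $e'$ in the same $\mc H$-orbit, with something that should be charged elsewhere, and one must ensure that the gain terms (new $1$-cells of $S$) and loss terms (vanished missing-sides along $Q$) are counted against disjoint pieces of data. I would resolve this by working orbit-by-orbit: fix an $\mc H$-orbit of $1$-cells meeting $\partial R$, pick the representative $e$, and compare $|\missing_X(Y,e)|$ with $|\missing_X(Y',e)|$ directly, using Lemma~\ref{lem:counting-sides} for the decrease and $M$-thinness for the crude upper bound on any increase, then sum. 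The hypothesis $3\lambda M<1/|\aut_{\mc H}(R)|$ together with $|S|<3\lambda|\partial R|$ is exactly calibrated so that the per-orbit losses outweigh the per-orbit gains, yielding the strict inequality~\eqref{eq:perimeter estimation}.
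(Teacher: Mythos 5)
Your general plan is the same as the paper's: account for the perimeter change orbit-by-orbit, bound the \emph{gain} (new $1$-cells) via $M$-thinness and $|S|<3\lambda|\partial R|$, and bound the \emph{loss} via Lemma~\ref{lem:counting-sides}. However, your concrete inequality has a quantitative gap that the paper's proof avoids.

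The problem is in the loss estimate. You restrict the sum $\sum_e|\added(e)|$ to $\mc H$-orbit representatives of $1$-cells in $Q$ only, getting $\sum m_e\geq|Q|>|\partial R|/2$ and hence a loss $>\frac{|\partial R|}{2|\aut_{\mc H}(R)|}$. Meanwhile you bound the gain by $M|S|<3\lambda M|\partial R|<\frac{|\partial R|}{|\aut_{\mc H}(R)|}$. These two estimates do not compare: the loss could be as small as $\frac{|\partial R|}{2|\aut_{\mc H}(R)|}+\varepsilon$ while the gain could be close to $\frac{|\partial R|}{|\aut_{\mc H}(R)|}$, so loss $>$ gain does not follow. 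In fact for your argument to close you would need the stronger hypothesis $6\lambda M<1/|\aut_{\mc H}(R)|$, which fails in the case $|\aut_{\mc H}(R)|=2$ (the reflection case) under the stated hypothesis $3\lambda M<1/|\aut_{\mc H}(R)|$.

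The fix, and what the paper does, is to sum $\sum_{e\in E}|\added(e)|$ over \emph{all} orbit representatives $E$ of $1$-cells of $\partial R$, not just those in $Q$; then $\sum_{e\in E}m_e=|\partial R|$ exactly, giving loss $\geq|\partial R|/|\aut_{\mc H}(R)|$, which strictly dominates the gain. The $|\added(e)|$ term is legitimate and useful even for orbits whose representative lies in $S\setminus Y$: such a new $1$-cell $e$ contributes $|\missing_X(Y',e)|=|\sides_X(e)|-|\added(e)|$ to the new perimeter, so the $|\added(e)|$ still appears as a subtraction (the $\mc H$-translates of $R$ fill sides at $e$ as well). This is exactly why your worry about double-counting at the $Q$/$S$ interface is a non-issue in the orbit-by-orbit accounting: each orbit contributes one term, full stop. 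You actually state the correct all-of-$\partial R$ bound in passing (``loss $\ge|\partial R|/|\aut_{\mc H}(R)|$'') before retreating to the $Q$-only version in your displayed inequality; had you carried out the former, your proof would coincide with the paper's Equations~\eqref{eq:Wise}--\eqref{eq:added}.
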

\begin{proof}
Let $E$ be a maximal subset of 1-cells of $\partial R$ that represent distinct $\mc H$-orbits of $1$-cells in $X$.
As all new 1-cells lie in $\mc H S$, and all new 2-cells are translates of $R$, we have the following:
\begin{equation}\label{eq:Wise}
\begin{split}
 \perimeter (Y', \mc H) &  \leq \perimeter (Y, \mc H) +  \perimeter (S, 1)  - \sum_{e \in E } |\added (e)|\\
\end{split}
\end{equation}

To verify Equation~\eqref{eq:perimeter estimation}
it therefore suffices to demonstrate Equation~\eqref{eq:Martinez}.
Note that the first inequality in Equation~\eqref{eq:Martinez} follows by combining Equations~\eqref{eq:inversion-remark}~and~\eqref{eq:added},
and the second inequality follows from our hypothesis.
\begin{equation}\label{eq:Martinez}
\perimeter (S, 1)  - \sum_{e \in E } |\added (e)| \leq |\partial R| \bigg( 3\lambda M - \frac{1}{|\aut_{\mc H}(R)|} \bigg)   < 0.
\end{equation}

\begin{equation}\label{eq:inversion-remark}
 \perimeter (S, 1) = \sum_{q \in \text{Edges}(S)} \sides(X, q) \leq M |S| \leq 3\lambda M |\partial R|
\end{equation}

\begin{equation}\label{eq:added}
\sum_{e \in E } |\added (e)|    \geq  \sum_{e \in E } m_e \frac{1}{|\aut_{\mc H}(R)|} =   \frac{|\partial R|}{|\aut_{\mc H}(R)|}.
\end{equation}
Equation~\eqref{eq:inversion-remark} holds by combining the hypotheses on thinness and length of $S$.
Equation~\eqref{eq:added} follows from Lemma~\ref{lem:counting-sides} using a partition of the 1-cells in $\partial R$.
\end{proof}

\begin{proof}[Proof of Proposition~\ref{prop:perimeter}]
It follows from the definitions that $Y'$ is a connected $\mc H$-cocompact subcomplex of $X$, and hence $\perimeter (Y', \mc H)$
is defined. Let $S$ and $Q$ be the inner and outer paths of the missing $3$-shell $R$ of $Y$.
Since $X$ is $C'(1/6)$ and $S$ is the concatenation of at most three pieces of $\partial R$,
\[ |S| < 3\lambda |\partial R| \leq \frac{|\partial R|}{2}.\]

If $\aut_{\mc H} (R)$ contains a rotation, then Lemma~\ref{lem:3.14159} implies that
$Y$ and $Y'$ have the same 1-skeleton, and therefore Equation~\ref{ineq:perimeter} follows immediately.

Suppose $\aut_{\mc H} (R)$ is trivial or generated by a reflection. The hypotheses  imply that
$3  \lambda M  < \frac{1}{\aut_{\mc H}(R)}$,  so Equation~\eqref{ineq:perimeter} follows from Lemma~\ref{lem:perimeter-estimation}.
\end{proof}

\begin{rem}\label{rem:no-inversions}
A strengthened version of Equation~\eqref{eq:Wise} is:
\begin{equation*}
 \perimeter (Y', \mc H)  \leq \perimeter (Y, \mc H) +  \perimeter (S, \aut_{\mc H}(R))  - \sum_{e \in E } |\added (e)| .
\end{equation*}
When $\aut (X)$ acts without inversions on the 1-skeleton of $X$, then Equation~\eqref{eq:inversion-remark} is strengthened to:
\begin{equation*}
 \perimeter (S, \aut_{\mc H}(R))  \leq \frac{M|S|}{|\aut_{\mc H}(R)|} \leq \frac{3\lambda M |\partial R|}{|\aut_{\mc H}(R)|}
\end{equation*}
Consequently, Lemma~\ref{lem:perimeter-estimation} holds under the weaker hypotheses:
\[ |S|<3 \lambda |R| \text{\  and  \ }  3\lambda M  < 1.\]
\end{rem}

\subsection{The Local Quasiconvexity Theorem}\label{subsection:lqc-criterion}

\begin{defn}[Relative finite generation]
Let $X$ be a 2-complex, and let $\mc H$ be a subgroup of $\aut( X)$.  We say that $\mc H$ is \emph{finitely generated relative to $0$-cell stabilizers}
if there is a finite number of $0$-cells $v_1, \dots ,v_n$ and a finite subset $S \subset \mc H$ such that
$S\cup \bigcup_{i=1}^n \mc H_{v_i}$ is a generating set for $\mc H$.
We use the notation $\mc H_{v}=\stabilizer_{\mc H}(v)$.
\end{defn}

\begin{thm}[Local Quasiconvexity Criterion]\label{thm:lqc-criteria}
Let $X$ be a $C'(1/6)$ complex that is simply-connected, thin,  $L$-circumscribed, and satisfies the Perimeter-reduction hypothesis.

If $\mc H<\aut (X)$ is finitely generated relative to a finite collection of $0$-cell stabilizers. Then there exists a connected and quasi-isometrically embedded $\mc H$-cocompact subcomplex of $X$.
\end{thm}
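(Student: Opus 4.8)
The plan is to build the desired subcomplex by starting with a finite piece capturing the generating data of $\mc H$, taking its $\mc H$-orbit to get an $\mc H$-cocompact subcomplex, and then repeatedly enlarging along missing $3$-shells until none remain; at that point Lemma~\ref{lem:No-shells-convexity} delivers the quasiconvexity conclusion.

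First I would produce a starting subcomplex. Let $v_1,\dots,v_n$ be the $0$-cells and $S\subset\mc H$ the finite set witnessing relative finite generation. Since $X$ is simply-connected, for each $s\in S$ choose a combinatorial path $\gamma_s$ in $X^1$ from some fixed basepoint-type $0$-cell (or from $v_i$ to $s.v_j$, chasing the generating relations appropriately) and let $Y_0$ be the union of the $\mc H$-orbit of all these finitely many paths together with the $\mc H$-orbits of the $v_i$. Then $Y_0$ is a connected, $\mc H$-invariant subcomplex of $X$ on which $\mc H$ acts cocompactly: cocompactness holds because $Y_0$ is covered by $\mc H$-translates of the finitely many cells in the $\gamma_s$ and the $v_i$; connectivity holds because the paths $\gamma_s$ and the vertices $v_i$ were chosen so that $\mc H\cdot Y_0$ is path-connected — this is exactly where relative finite generation of $\mc H$ is used, so that traversing a word in $S\cup\bigcup\mc H_{v_i}$ corresponds to concatenating translates of the $\gamma_s$ with paths fixing the $v_i$. (Here one must take a little care that the $\mc H_{v_i}$-part of the generating set does not obstruct connectivity; since $\mc H_{v_i}$ fixes $v_i$, translates of $\gamma_s$ under such elements still share the vertex $v_i$, so the orbit stays connected.)

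Next I would run the enlargement process. If $Y_0$ has a missing $3$-shell $R$, apply the Perimeter-reduction hypothesis to obtain a connected $\mc H$-cocompact subcomplex $Y_1\subset X$ with $\perimeter(Y_1,\mc H)<\perimeter(Y_0,\mc H)$; in fact one may take $Y_1$ to be the $(\mc H,R)$-enlargement of $Y_0$, which is connected, $\mc H$-cocompact, and has strictly smaller perimeter by the hypothesis. Iterating gives a sequence $Y_0\supset$-type enlargements $Y_0, Y_1, Y_2,\dots$ with strictly decreasing perimeters. Since $\perimeter(Y_i,\mc H)$ is a non-negative integer by Lemma~\ref{lem:finite-perimeter}, the process terminates after finitely many steps at some connected $\mc H$-cocompact subcomplex $Y=Y_k$ with no missing $3$-shell. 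Finally, $X$ is a simply-connected $C'(1/6)$ complex and $|\partial R|\le L$ for every $2$-cell $R$ since $X$ is $L$-circumscribed, so Lemma~\ref{lem:No-shells-convexity} applies to $Y$ and shows $Y$ is a $3L$-quasiconvex subcomplex of $X^1$, completing the proof.

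The main obstacle is the first step: producing $Y_0$ so that it is \emph{simultaneously} connected and $\mc H$-cocompact, and verifying that the notion of ``finitely generated relative to $0$-cell stabilizers'' is exactly strong enough to guarantee this. The perimeter-decreasing loop is essentially automatic once the Perimeter-reduction hypothesis and the integrality of the perimeter are in hand, and the final appeal to Lemma~\ref{lem:No-shells-convexity} is immediate; so the real content is packaging the generating data into an honest connected cocompact subcomplex and confirming termination is well-founded (which it is, by the integer-valued strictly-decreasing perimeter).
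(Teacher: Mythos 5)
Your proposal reproduces the paper's argument: construct an initial connected $\mc H$-cocompact subcomplex from the relative generating data (the paper's Lemma~\ref{lem:Connected-Complex}), iterate the Perimeter-reduction hypothesis to eliminate missing $3$-shells, invoke non-negative integrality of the perimeter (Lemma~\ref{lem:finite-perimeter}) for termination (Lemma~\ref{lem:No-shells-Complex}), and finish with Lemma~\ref{lem:No-shells-convexity}. Two minor slips that do not affect correctness: the Perimeter-reduction hypothesis by itself only asserts the existence of \emph{some} smaller-perimeter $\mc H$-cocompact $Y'$, not that the $(\mc H,R)$-enlargement works (that stronger fact is Proposition~\ref{prop:perimeter}, which needs extra hypotheses not assumed here); and the successive $Y_i$ need not be nested, though the integer-valued decreasing perimeter still forces termination.
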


The proof is discussed after the following two lemmas. 
\begin{lem}[Initial subcomplex]\label{lem:Connected-Complex}
Let $X$ be a connected thin 2-complex. Let $\mc H$ be a subgroup of $\aut( X)$
and suppose that $\mc H$  is finitely generated relative to a finite collection of $0$-cell stabilizers.

If $C$ is a compact subcomplex of $X$, then there is a connected and compact subcomplex $Y_0$
containing $C$ such that:
\begin{enumerate}
\item $\mc H$ is finitely generated relative to the stabilizers of a collection of $0$-cells of $Y_0$, and
\item $Y=\bigcup_{g \in \mc H} g Y_0$ is a connected $\mc H$-cocompact subcomplex of $X$.
\end{enumerate}
\end{lem}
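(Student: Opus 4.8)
The plan is to build $Y_0$ by hand, enlarging $C$ just enough to carry the relative generation data and to force overlaps among the $\mc H$-translates of $Y_0$. Fix, as supplied by the hypothesis, a finite set $S\subset\mc H$ and $0$-cells $v_1,\dots,v_n$ of $X$ with $S\cup\bigcup_{i=1}^{n}\mc H_{v_i}$ a generating set of $\mc H$, and fix one further $0$-cell $w$ of $X$. Let $Z$ be the subcomplex obtained from $C$ by adjoining the finitely many $0$-cells $v_1,\dots,v_n$, $w$, and $sw$ for $s\in S$; it is compact. Since $X$ is connected, its $1$-skeleton is connected, so $Z$ (which has finitely many cells, hence finitely many components) can be joined up by adjoining finitely many edge paths of $X^1$; the result is a compact \emph{connected} subcomplex $Y_0\supset C$. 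Conclusion~(1) is then immediate: each $v_i$ is a $0$-cell of $Y_0$, and $S\cup\bigcup_i\mc H_{v_i}$ generates $\mc H$ by the choice of $S$ and of the $v_i$.

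Set $Y=\bigcup_{g\in\mc H}gY_0$. As a union of subcomplexes it is an $\mc H$-invariant subcomplex, and it is $\mc H$-cocompact because every cell of $Y$ is an $\mc H$-translate of one of the finitely many cells of the compact complex $Y_0$, so $\mc H$ has finitely many cell orbits on $Y$. The only substantive claim is that $Y$ is connected, and this is precisely what the extra $0$-cells are for. Let $T=\{\,g\in\mc H:\ Y_0\cap gY_0\neq\emptyset\,\}$. For $s\in S$ we have $w\in Y_0$, so $sw\in sY_0$, while $sw\in Y_0$ by construction, whence $s\in T$. For $h\in\mc H_{v_i}$ we have $v_i\in Y_0$ and $v_i=h v_i\in hY_0$, whence $v_i\in Y_0\cap hY_0$ and $h\in T$. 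Therefore $\langle T\rangle\supseteq\langle\,S\cup\bigcup_i\mc H_{v_i}\,\rangle=\mc H$, i.e.\ $T$ generates $\mc H$.

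It remains to deduce connectedness of $Y$ from the fact that $T$ generates $\mc H$. Given $y\in Y$, pick $g\in\mc H$ with $y\in gY_0$ and write $g=g_1g_2\cdots g_m$ with each $g_j\in T$. The subcomplexes $Y_0,\ g_1Y_0,\ g_1g_2Y_0,\ \dots,\ g_1\cdots g_mY_0$ are each connected, being translates of the connected $Y_0$, and consecutive terms meet, since $g_1\cdots g_{k-1}Y_0\cap g_1\cdots g_kY_0=g_1\cdots g_{k-1}\!\left(Y_0\cap g_kY_0\right)\neq\emptyset$ because $g_k\in T$; hence their union is connected, lies in $Y$, and contains both $Y_0$ and $y$. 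Thus every point of $Y$ lies in a connected subset of $Y$ that contains $Y_0$, so $Y$ is connected, giving~(2). I expect the whole argument to be this bookkeeping; the one nonobvious choice — and the place a naive attempt would fail — is to put the $0$-cells $w$ and $sw$ into $Y_0$, which is exactly what forces $S\subseteq T$ and hence the connectivity of $Y$. The only mild subtlety in the final step is the standard fact that a finite chain of connected subspaces with consecutive nonempty intersections has connected union.
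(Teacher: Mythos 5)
Your proof is correct and follows essentially the same approach as the paper: choose a connected compact $Y_0\supset C$ containing enough extra $0$-cells so that $Y_0\cap gY_0\neq\emptyset$ for each $g$ in the chosen relative generating set, then chain translates of $Y_0$ to deduce connectedness of $Y$. Your write-up is in fact somewhat more careful than the paper's (which is terse and contains some notational slips in its list of added vertices), but it is the same argument; the only unstated step is the trivial observation that $T=\{g:Y_0\cap gY_0\neq\emptyset\}$ is symmetric, which you implicitly use when writing $g$ as a product of elements of $T$.
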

\begin{proof}
As $\mc H$ is finitely generated relative to $0$-cell stabilizers, there is a subset $S=\{g_1, \dots , g_m\} \subset \mc H$
and   $0$-cells $x_1, \dots , x_n$ of $X$ such that $\mc H$ is generated by $S \cup \bigcup_{i=1}^n \mc H_{x_i}$,
where $\mc H_{x}$ denotes the stabilizer of $x$ in $\mc H$.

The idea is to choose a subcomplex $Y_0$ with the property that $aY_0\cap Y_0 \neq \emptyset$ for each the generators
chosen above.
Since $X$ is connected, there is a connected compact subcomplex $Y_0$ containing $C$ and the set of vertices
\[ \{ x_0 \} \cup \{g_i.x_0 | 1\leq i\leq m \} \cup  \{g_i.v_j | 1\leq i\leq m, \  1\leq j \leq n \}. \]
Since $Y_0$ is compact, $Y=\bigcup_{g \in \mc H} g.Y_0$ is a $\mc H$-cocompact subcomplex of $X$.
It is straight forward to show that for each $a$ in the generating set we have that $Y_0 \cap gY_0 $, and therefore $Y$ is connected.
\end{proof}

\begin{lem}[Terminal Subcomplex]\label{lem:No-shells-Complex}
Let $X$ be a connected  thin 2-complex that satisfies the perimeter-reduction hypothesis.
Let $\mc H$ be a subgroup of $\aut( X)$, and suppose that $\mc H$  is finitely generated relative to a finite collection of $0$-cell stabilizers.
Then there exists a connected $\mc H$-cocompact subcomplex $Y \hookrightarrow X$ with no missing $3$-shells.
\end{lem}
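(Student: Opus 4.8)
The strategy is an iterative perimeter-reduction argument. Starting from the initial $\mc H$-cocompact subcomplex produced by Lemma~\ref{lem:Connected-Complex} (applied to, say, a single $0$-cell or any compact $C$), we repeatedly invoke the Perimeter-reduction hypothesis: as long as the current subcomplex $Y$ has a missing $3$-shell, we replace it by a connected $\mc H$-cocompact subcomplex $Y'$ with strictly smaller perimeter. Since by Lemma~\ref{lem:finite-perimeter} the perimeter is always a non-negative integer, this process terminates after finitely many steps, and the resulting subcomplex has no missing $3$-shells.

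The key steps, in order, are as follows. First, apply Lemma~\ref{lem:Connected-Complex} to obtain an initial connected $\mc H$-cocompact subcomplex $Y_0$. Second, set up the induction: if $Y_i$ has no missing $3$-shell we are done; otherwise apply the Perimeter-reduction hypothesis to get $Y_{i+1}$ with $\perimeter(Y_{i+1},\mc H) < \perimeter(Y_i,\mc H)$, noting that $Y_{i+1}$ is again connected and $\mc H$-cocompact so that the hypothesis applies again. Third, observe that $\perimeter(Y_0,\mc H) < \infty$ and that the sequence $\perimeter(Y_0,\mc H) > \perimeter(Y_1,\mc H) > \cdots$ is a strictly decreasing sequence of non-negative integers, hence finite; the last term $Y = Y_N$ of the sequence then has no missing $3$-shell.

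The only subtlety worth flagging — and the step I would treat with the most care — is the bookkeeping that keeps the inductive hypothesis applicable throughout: one must check that each $Y_{i+1}$ is connected and $\mc H$-cocompact (so the perimeter is defined and the Perimeter-reduction hypothesis can be reapplied), and that $\mc H$ remains finitely generated relative to a finite collection of $0$-cell stabilizers (this hypothesis on $\mc H$ is fixed and does not depend on $Y_i$, so it is automatic). Both points are immediate from the statement of the Perimeter-reduction hypothesis, which delivers a connected $\mc H$-cocompact $Y'$; there is no real obstacle, only the need to phrase the termination argument correctly as descent on a well-ordered quantity. No hyperbolicity or small-cancellation input is needed here directly — those were already absorbed into the Perimeter-reduction hypothesis — so the proof is essentially a clean finite-descent argument.
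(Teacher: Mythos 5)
Your argument is correct and is essentially identical to the paper's: obtain an initial connected $\mc H$-cocompact subcomplex from Lemma~\ref{lem:Connected-Complex}, then iterate the Perimeter-reduction hypothesis and conclude by descent on the non-negative integer $\perimeter(\cdot,\mc H)$. The extra bookkeeping you flag (connectedness and $\mc H$-cocompactness of each $Y_{i+1}$, and that finite relative generation of $\mc H$ is independent of $Y_i$) is handled implicitly in the paper, exactly as you observe.
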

\begin{proof}
By Lemma~\ref{lem:Connected-Complex}, there exists a connected $\mc H$-cocompact subcomplex $Y$. If $Y$ has a missing $3$-shell, then, by hypothesis, one can replace $Y$ by another connected $\mc H$-cocompact subcomplex with strictly smaller perimeter. Since the perimeter is a non-negative integer, this process must terminate at a connected $\mc H$-cocompact subcomplex with no missing $3$-shells.
\end{proof}

\begin{proof}[Proof of Theorem~\ref{thm:lqc-criteria}]
By Lemma~\ref{lem:No-shells-Complex}, there is a connected $\mc H$-cocompact subcomplex $Y \subset X$ with no missing $3$-shells. By Proposition~\ref{lem:No-shells-convexity}, the inclusion $Y \rightarrow X$ is an $L$-quasi-isometric embedding.
\end{proof}

\section{Applications to high powered one-relator products}
\subsection{Background on one-relator products}
The natural framework for one-relator products is the relatively hyperbolic setting. We state Theorem~\ref{thm:filling} below to contextualize our most general result on one-relator products.  Theorem~\ref{thm:filling}.(\ref{filling-1}) is the ``Freiheitssatz for one-relator products'', and Theorem~\ref{thm:filling}.(\ref{filling-2}) is an immediate consequence of ``Newman Spelling Theorem''. We refer the reader to the survey article~\cite{DuHo94} by Duncan and Howie on one-relator products for an historical account of these ideas.  

\begin{thm}\label{thm:filling}
Let $\mc A$ and $\mc B$ be countable groups, and $r \in \mc A \ast \mc B$ a cyclically reduced word of length at least $2$.
If $m \geq 6$ then the following hold:
\begin{enumerate}
\item\label{filling-1}  The natural homomorphisms $\mc A \rightarrow \abrm$ and $\mc B \rightarrow \abrm$ are injective, and we regard
    $\mc A$ and $\mc B$ as subgroups.
    \item\label{filling-2}  The group $\abrm$ is  hyperbolic relative to  $\{\mc A, \mc B\}$.
\end{enumerate}
\end{thm}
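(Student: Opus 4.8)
The plan is to derive both assertions from the fact that for $m\geq 6$ the relator $r^m$ obeys the $C'(1/6)$ small-cancellation condition over the free product $\mc A\ast\mc B$, and then to quote classical machinery. Work with van Kampen diagrams over $\mc A\ast\mc B$ in the sense of Lyndon--Schupp: disc diagrams whose $1$-cells carry labels from $(\mc A\setminus 1)\sqcup(\mc B\setminus 1)$ and whose $2$-cells are $m|r|$-gons labelled by cyclic conjugates of $r^{\pm m}$ read syllable-by-syllable. Because $r$ is cyclically reduced with $|r|\geq 2$, any piece of $r^m$ has syllable-length $<|r|$; if $r=s^k$ is a proper power then $|s|\geq 2$ and, replacing the relator by $s^{km}$, pieces have length $<|s|\leq|r|$ as well. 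Hence for every $2$-cell $R$ and every piece $P$ one has $|P|<|r|\leq\tfrac16 m|r|=\tfrac16|\partial R|$, so any reduced such diagram, read as an abstract disc diagram, satisfies $C'(1/6)$, and Theorem~\ref{thm:fan-classification} together with the usual fan/ladder estimates applies to it.

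For part~(\ref{filling-1}), suppose $a\in\mc A$ is nontrivial in $\mc A\ast\mc B$ but trivial in $\abrm$; then $a\in N(r^m)$, so by van Kampen's lemma there is a reduced disc diagram $D$ over the presentation $\langle \mc A,\mc B\mid r^m\rangle$ whose boundary cycle spells the reduced, length-one word $a$, with $\area(D)\geq 1$. Apply Theorem~\ref{thm:fan-classification}. The diagram $D$ is not a single $2$-cell (its boundary would have length $m|r|\geq 12$); it has no spur and no $i$-shell with $i\leq 3$, since the outer path of such an $i$-shell is a subpath of $\partial D$ of length $>|\partial R|-3\cdot\tfrac16|\partial R|=\tfrac12 m|r|\geq 3|r|>1$ while $|\partial D|=1$, and a spur would force a backtrack in the reduced word $a$; and for the same reasons $D$ is not a ladder. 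Hence $D$ is a single $0$-cell, contradicting $\area(D)\geq 1$. Therefore $\mc A\hookrightarrow\abrm$, and the argument for $\mc B$ is identical.

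For part~(\ref{filling-2}), the same small-cancellation input gives a relative Dehn algorithm for $\abrm$: by the Greendlinger conclusion of Theorem~\ref{thm:fan-classification}, any nonempty word over $\mc A\ast\mc B$ that vanishes in $\abrm$ but not in $\mc A\ast\mc B$ contains a subword agreeing with more than a $\bigl(1-\tfrac3m\bigr)$-fraction of a cyclic conjugate of $r^{\pm m}$, and so can be strictly shortened by substituting the complementary shorter word. This is exactly Newman's Spelling Theorem, and it exhibits the finite relative presentation $\langle \mc A,\mc B\mid r^m\rangle$ as having a linear relative isoperimetric function; by Osin's characterization of relative hyperbolicity~\cite{Os06-1}, $\abrm$ is hyperbolic relative to $\{\mc A,\mc B\}$. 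Equivalently one may argue geometrically: the coned-off relative Cayley complex $X$ of $\abrm$ is a $C'(1/6)$ complex on which $\abrm$ acts with finitely many cell orbits, with conjugates of $\mc A$ and $\mc B$ as peripheral vertex stabilizers and with finite edge stabilizers, and the disc-diagram estimates above show that $X^1$ is hyperbolic and fine, whence Bowditch's fine-graph criterion gives the conclusion.

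I expect the only real work to be the small-cancellation bookkeeping over the free product: confirming the piece bound $|P|<|r|$ (hence $C'(1/6)$ already at $m=6$) while correctly handling the proper-power case $r=s^k$ and the syllable conventions, setting up van Kampen diagrams over $\mc A\ast\mc B$ and the notion of a reduced such diagram in the free-product sense, and checking that Theorem~\ref{thm:fan-classification} applies verbatim to them. Everything else is classical: the Freiheitssatz for one-relator products and Newman's Spelling Theorem are both surveyed in~\cite{DuHo94}, and the equivalence between a linear relative isoperimetric function and relative hyperbolicity is due to Osin~\cite{Os06-1}.
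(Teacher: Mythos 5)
The paper does not actually prove this theorem; it states it as background and immediately defers to the literature, identifying part~(\ref{filling-1}) with the classical Freiheitssatz for one-relator products and part~(\ref{filling-2}) with Newman's Spelling Theorem (quoted as Theorem~\ref{prop:filling}), citing the Duncan--Howie survey~\cite{DuHo94} and noting that Osin's framework~\cite{Os06-1} also recovers~(\ref{filling-2}) for large enough~$m$. Your proposal follows exactly this route but goes further, reconstructing the standard Lyndon--Schupp small-cancellation proof of the Freiheitssatz and indicating how Newman's theorem produces a relative Dehn algorithm and hence a linear relative isoperimetric function. The Freiheitssatz sketch is correct in substance, though to make it rigorous you would need to set up reduced $\mc R$-diagrams over free products with the usual syllable-merging conventions and verify that Theorem~\ref{thm:fan-classification} applies to them (the paper itself sidesteps this by working in the coned-off Cayley complex, which is a genuine $C'(1/m)$ $2$-complex by Proposition~\ref{prop:free-product-complex}). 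Two small imprecisions: the Greendlinger subword that Newman's theorem actually gives has length at least $(m-1)|r|-1$, which is stronger than the $(1-3/m)$-fraction you state (the latter is what the raw $C'(1/m)$ bound would give); both exceed half of $m|r|$ once $m\geq 6$, so the conclusion is unaffected. And in the proper-power case $r=s^k$ the pieces of $r^m=s^{km}$ have length $<|s|$, which you correctly observe still gives $C'(1/6)$, but the relevant $2$-cells should then be taken to have $\Z_{km}$-symmetry so that the coned-off complex has embedded cell boundaries; the paper handles this by explicitly assuming $r$ is not a proper power when constructing $\widehat X$. None of this is a genuine gap, since like the paper you can simply cite the Freiheitssatz and Newman's theorem outright.
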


\subsection{The Spelling Theorem and the Coned-off Cayley Complex $\widehat X$}
Let $\mc A$ and $\mc B$ be countable groups, let $r \in \mc A \ast \mc B$ a cyclically reduced word of length at least $2$ that is not a proper power, let $m>0$, and let $\mc G =\abrm$. The following was proven in \cite[Thm 3.1]{DuHo94-2}:
\begin{thm}[Spelling Theorem] \label{prop:filling}
Assume that $m \geq 6$. Let $w$ be a non-empty, cyclically reduced word belonging to the
normal closure of $r^m$. Then either:
\begin{enumerate}
\item $w$ is a cyclic permutation of $r^m$; or
\item $w$ has two strongly disjoint cyclic subwords $U_1$, $U_2$, such that each $U_i$ is
identical to a cyclic subword of $r^m$ of length at least $(m - 1)\ell - 1$.
\end{enumerate}
In particular, the length $|w|$ of the normal form of $w$ is at least $m\ell$.
\end{thm}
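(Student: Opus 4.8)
The statement is \cite[Thm 3.1]{DuHo94-2}, so I only sketch the argument; it runs through reduced van Kampen diagrams over the free product $\mc A \ast \mc B$. Fix a non-empty cyclically reduced $w \in \nclose{r^m}$, and choose among all van Kampen diagrams over the relative presentation $\langle\, \mc A \ast \mc B \mid r^m\,\rangle$ with boundary cycle reading $w$ one, call it $D$, that is reduced and of minimal area. Here a ``diagram over a free product'' is a planar $2$-complex whose edges carry letters of $\mc A \ast \mc B$ and whose $2$-cells are labelled by $r^{\pm m}$, and being reduced additionally forbids cancellation between two adjacent $r^m$-cells and forbids a valence-$2$ interior vertex separating two edges of the same free factor; such a $D$ exists because $w \in \nclose{r^m}$. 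The first step is to extract the small-cancellation input: after replacing $r$ by its primitive root if $r$ is a proper power (which changes neither $\nclose{r^m}$ nor the truth of the conclusion), $r$ is cyclically reduced and not a proper power, so a Fine--Wilf periodicity argument shows that every piece --- every arc along which two $2$-cells of $D$, or a $2$-cell and $\partial D$ traversed in two ways, agree --- has length at most $\ell - 1 < \tfrac1m\,|\partial R|$. Hence the relevant small-cancellation hypothesis holds: the underlying complex is $C'(1/m)$ and, since $m \geq 6$, is $C'(1/6)$, so Theorem~\ref{thm:fan-classification} and Corollary~\ref{cor:boundary cycles embed} apply to $D$.

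Next I would run the fan/Greendlinger classification on $D$. The possibility that $D$ is a single $0$-cell is excluded since $w \neq 1$. If $D$ is a single $2$-cell, then by Corollary~\ref{cor:boundary cycles embed} its boundary cycle embeds and reads $w$, so $w$ is a cyclic permutation of $r^{\pm m}$; this is conclusion~(1), and in particular $|w| = m\ell$. Otherwise $\area(D) \geq 2$ and, because $w$ is cyclically reduced, $D$ has no spurs (a spur would exhibit a cancellation in $w$, even across the basepoint). Theorem~\ref{thm:fan-classification} then leaves two cases: $D$ is a ladder with a $0$-shell or $1$-shell at each of its two ends, or $D$ has at least three $i$-shells with $i \leq 3$ along $\partial D$. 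In the first case take the two shell $2$-cells at the ends of the ladder; in the second, choose two of the three boundary shells that are non-adjacent along $\partial D$. Either way one obtains two shell $2$-cells $R_1, R_2$ whose outer paths lie on disjoint, separated sub-arcs of $\partial D$; these are the ``strongly disjoint'' witnesses.

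It remains to see that for each $k$ the outer path $Q_k$ of $R_k$ --- which is literally a cyclic subword $U_k$ of $w$ (being on $\partial D$) and literally a cyclic subword of $r^{\pm m}$ (being a subpath of $\partial R_k$) --- satisfies $|Q_k| \geq (m-1)\ell - 1$. Writing $\partial R_k = Q_k S_k$ with $S_k$ the inner path, this amounts to the estimate $|S_k| \leq \ell + 1$. The generic $C'(1/6)$ bound only gives $|S_k| < \tfrac12|\partial R_k|$, which is far too weak; the sharp bound is the technical core, and it is where minimality of area and the proper-power structure of $r$ enter: two $r^m$-cells overlapping along more than one full period $r$ admit a diagram-reducing move, contradicting minimality, and a refined accounting of how a boundary shell meets the rest of $D$ --- rather than the crude ``at most three pieces'' --- forces its inner path to have length at most one period plus a bounded error. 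Granting this, conclusion~(2) holds. Finally, the assertion $|w| \geq m\ell$ is bookkeeping: it is an equality in case~(1), and in case~(2) the word $w$ contains $U_1$ and $U_2$ together with the non-empty complementary arcs separating them, so $|w| \geq 2\big((m-1)\ell - 1\big) \geq m\ell$ since $m \geq 6$ and $\ell \geq 2$.

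The main obstacle is exactly the penultimate step: setting up, within the combinatorics of van Kampen diagrams over $\mc A \ast \mc B$, both the correct notion of a reduced diagram in the presence of two free factors and the sharp length bound $|S_k| \leq \ell + 1$ on the inner path of an extremal shell. This, together with the precise formulation of ``strongly disjoint'' and the reduction to $r$ not a proper power, is the content of Duncan and Howie's proof in \cite{DuHo94-2}, and is considerably more delicate than the abstract $C'(1/6)$ theory recalled in Section~2.
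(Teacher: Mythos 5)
The paper does not prove this statement: it is quoted verbatim from Duncan and Howie \cite[Thm 3.1]{DuHo94-2} and used as a black box, so there is no internal argument to compare yours against. Your sketch is a sensible high-level road map for a small-cancellation proof over the free product, and you are right to single out the inner-path estimate as the crux. But it is worth being more precise about why it cannot be reached with what is recalled in Section~2. Theorem~\ref{thm:fan-classification} only guarantees shells with $i\le 3$, and the $C'(1/m)$ piece bound gives $|S_k|\le 3(\ell-1)$, hence $|Q_k|\ge (m-3)\ell+3$. This matches $(m-1)\ell-1$ exactly when $\ell=2$ and falls short by $2\ell-4>0$ for every $\ell\ge 3$, so the generic Greendlinger dichotomy alone does not yield conclusion~(2) except in the smallest case. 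The auxiliary devices you invoke do not close this gap either: ``two cells overlapping along more than a full period forces a reduction'' is already subsumed in the hypothesis that $D$ is a reduced diagram and the piece bound $|P|<\ell$, and it says nothing once the inner path of a $3$-shell is assembled from three separate pieces. What Duncan and Howie actually do is not a corollary of abstract $C'(1/6)$ theory; it is a bespoke curvature/weight argument on pictures over one-relator products, with a detailed analysis of exceptional regions, which is why the sharp constant $(m-1)\ell-1$ (rather than some $(m-3)\ell$-type bound) comes out. In short: your outline is accurate and honest about where the difficulty sits, but the central estimate $|S_k|\le \ell+1$ remains entirely unjustified by the tools in this paper, which is exactly why the authors cite the theorem rather than reprove it.
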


\begin{defn}[Coned-off Cayley Graph]
Let $\widehat \Gamma$ be the graph with vertex set equal  $\mc G \cup \{g\mc A : g\in \mc G\} \cup \{g\mc B : g\in \mc G\}$, i.e., there is a vertex for each element of $\mc G$, and a vertex for each left coset of $\mc A$ and $\mc B$. An element $g \in \mc G$ is connected to the left coset $f\mc A$ if and only if $g \in f\mc A$, and analogously $g$ is connected to $f\mc B$ if and only if $g \in f\mc B$. The resulting graph is called \emph{the coned-off Cayley graph of $\abrm$ with respect to $\{\mc A, \mc B\}$} (and with respect to the empty relative generating set). 

Observe that since $\mc A\cup \mc B$ is a generating set for $\abrm$, the graph $\widehat \Gamma$ is connected. Moreover each path in $\widehat \Gamma$ between elements of $\abrm$ is determined by its startpoint and an element of $\mc A \ast \mc B$.  
\end{defn}

\begin{defn}[Coned-off Cayley Complex]
We define the \emph{coned-off Cayley complex $\widehat X$ of $\abrm$} as follows:
The $1$-skeleton of $\widehat X$ is $\widehat \Gamma$. We add a single $2$-cell to $\widehat \Gamma$ for each closed cycle in $\widehat \Gamma$ labelled by $r^m$. We emphasize, that each such closed cycle corresponds to $m$~distinct closed paths, and so each $2$-cell has $\Z_m$ stabilizer under the $\abrm$ action.

Finally, we observe that when $|r| \geq 2$ and $m\geq6$, each $2$-cell in $\widehat X$ has embedded boundary cycle. Indeed, this follows from Theorem~\ref{prop:filling}.
\end{defn}

\begin{prop}\label{prop:free-product-complex}
If $|r| \geq 2$, $m\geq6$, then the Coned-off Cayley complex $\widehat X$ of $\abrm$ is simply-connected , is $m|r|$-circumscribed, is $|r|$-thin, and is a $C'(\frac{1}{m} + \epsilon)$-complex for each $\epsilon>0$.
\end{prop}
\begin{proof}
Let $Y_A,Y_B$ be standard 2-complexes of multiplication table presentations for $A,B$,
and let $Y=Y_A\vee Y_B$ denote their wedge, and let $Y_r$ be the space obtained by attaching an additional 2-cell
along $r^m$.
Let $\widetilde Y$ be its universal cover. Let $\hat Y$ denote the 3-complex obtained by coning-off each copy of
$\widetilde Y_A$ and $\widetilde Y_B$. We can collapse along free 2-faces and then along free 1-faces,
 so that only cone-edges remain. Note that the original 2-cell boundary cycles are homotoped to paths travelling in cone-edges.
Finally, each family consisting of $m$ two cells with common boundary is collapse to a single 2-cell. Observe that this does not affect simple connectivity, and we have constructed $\hat X$.

$\widehat X$ is $m|r|$-circumscribed since each $2$-cell has boundary cycle $r^m$.

Each $A$-syllable of $r$, corresponds to the concatenation of two $A$-cone-edges in $\widehat \Gamma$.
As there are  $\frac12|r|$ such $A$-syllables in $r$, we see that $\widehat X$ is $|r|$-thin.

The $C'(\frac{1}{m} + \epsilon)$ property is a variation of the well-known fact that if some word $u$ occurs twice in $r^m$,
then either these two occurrences are in the same $\Z_m$ orbit, or $|u|<|r|$.
\end{proof}

A notable difference with the standard case here is that if a syllable of $r$ has order 2, this leads to a length 2-piece in $\hat X$. Hence  $C'(\frac{1}{m})$ would not hold when  $|r|=2$. The reader is urged to consider the example $\Z_2\ast \Z_3$ and $(ab)^7$. The $\hat X$ is a subdivision of the $(7,3)$ tiling of the hyperbolic plane. In this case $\hat X$ is $C'(\frac{1}{7}+\epsilon)$ but not $C'(\frac{1}{7})$. 

\subsection{Proof of Theorem~\ref{thm:lq-products}}
\label{sub:lq-products}
\begin{thm}\label{thm:lq-products}
Let $\mc A$ and $\mc B$ be countable groups, and let $r \in \mc A \ast \mc B$ be a cyclically reduced word of length at least $2$.
Suppose that $3|r|<m$.

If $\mc H$ is a subgroup of $\abrm$ that is finitely generated relative to $\{\mc A, \mc B\}$,
then $\mc H$ is quasiconvex relative to $\{\mc A, \mc B\}$.
\end{thm}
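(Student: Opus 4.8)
The plan is to reduce Theorem~\ref{thm:lq-products} to Theorem~\ref{thm:local-quasiconvexity} by letting $X = \widehat X$ be the coned-off Cayley complex of $\abrm$ with respect to $\{\mc A, \mc B\}$, and letting $\mc H$ act on $X$ by left multiplication. First I would invoke Proposition~\ref{prop:free-product-complex}: for $m$ sufficiently large relative to $\mc A, \mc B, r$, the complex $\widehat X$ is a simply-connected $C'(\frac1m)$-complex that is $m|r|$-circumscribed and $|r|$-thin. Setting $\lambda = \frac1m$ and $M = |r|$, the small-cancellation product is $6\lambda M = \frac{6|r|}{m}$, so the hypothesis $6\lambda M < 1$ of Theorem~\ref{thm:local-quasiconvexity} is satisfied precisely when $6|r| < m$. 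Under that strengthened inequality, Theorem~\ref{thm:local-quasiconvexity} gives that $\mc H$-orbits of $0$-cells of $\widehat X$ are quasiconvex in $\widehat X^1$. Since $\widehat X^1$ is exactly the coned-off Cayley graph $\widehat \Gamma$ of $\abrm$ relative to $\{\mc A, \mc B\}$, and this graph is fine and hyperbolic (the hyperbolicity coming from the $C'(1/6)$ condition via Theorem~\ref{thm:filling}.(\ref{filling-2})), quasiconvexity of the orbit $\mc H \cdot 1$ in $\widehat \Gamma$ is, by the equivalence established in \cite{MaWi10b}, the same as relative quasiconvexity of $\mc H$ in $\abrm$ relative to $\{\mc A, \mc B\}$. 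A subgroup finitely generated relative to $\{\mc A, \mc B\}$ is precisely one that is finitely generated relative to the $0$-cell stabilizers $\mc A$ and $\mc B$ (the cone-point stabilizers), so the hypotheses match up.

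To remove the gap between $6|r| < m$ and the asserted $3|r| < m$, the key is Remark~\ref{rem:no-inversions}: if $\aut(\widehat X)$ acts without inversions on $\widehat X^1$, then Theorem~\ref{thm:local-quasiconvexity} holds under the weaker hypothesis $3\lambda M < 1$, i.e. $3|r| < m$. So the next step is to verify that the $\abrm$-action on $\widehat \Gamma = \widehat X^1$ is without inversions. This should follow from the structure of the coned-off Cayley graph: its $1$-cells are cone-edges joining group elements to cosets of $\mc A$ or $\mc B$, and the two endpoint-types (a vertex of $\abrm$ versus a coset vertex) are preserved by left multiplication, so no group element can reverse a $1$-cell. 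Hence the action is without inversions and the sharper bound $3|r| < m$ suffices.

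The main obstacle I expect is bookkeeping around the "for $m$ sufficiently large" clause in Proposition~\ref{prop:free-product-complex}: that proposition only asserts the $C'(\frac1m)$, circumscribedness, and thinness conclusions for $m > N(\mc A,\mc B, r)$, whereas Theorem~\ref{thm:lq-products} claims its conclusion for all $m$ with $3|r| < m$. Resolving this requires either sharpening Proposition~\ref{prop:free-product-complex} so that $N$ can be taken to be (a small multiple of) $|r|$ — which the parenthetical remark in its proof about $u$ occurring twice in $r^m$ suggests is true, since the relevant failure of $C'(\frac1m)$ only happens when two occurrences of a piece are in the same $\integers_m$-orbit — or appealing directly to the Spelling Theorem (Theorem~\ref{prop:filling}) to control pieces for all $m \geq 6$. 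I would handle this by noting that the length estimate $|w| \geq m\ell$ for elements of the normal closure, together with the "same orbit or $|u| < |r|$" dichotomy, forces the piece-length bound $|P| < |r| = \frac1m \cdot m|r| = \lambda |\partial R|$ whenever $m \geq 6$, independently of how large $m$ is, so the $C'(\frac1m)$ property in fact holds for all $m$ with $m \geq 6$, and in particular for all $m$ with $3|r| < m$ (note $3|r| \geq 6$).

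Finally I would assemble the pieces: for $m$ with $3|r| < m$, the complex $\widehat X$ is simply-connected, $C'(\frac1m)$, $m|r|$-circumscribed, and $|r|$-thin, the $\abrm$-action is without inversions, and $3\lambda M = \frac{3|r|}{m} < 1$; applying the no-inversions form of Theorem~\ref{thm:local-quasiconvexity} to the subgroup $\mc H < \abrm < \aut(\widehat X)$ — which is finitely generated relative to the cone-vertex stabilizers, these being conjugates of $\mc A$ and $\mc B$ — yields that $\mc H \cdot 1$ is quasiconvex in $\widehat \Gamma$; and translating back via \cite{MaWi10b} gives that $\mc H$ is quasiconvex relative to $\{\mc A, \mc B\}$, as claimed.
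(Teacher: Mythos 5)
Your proposal follows the paper's own proof: apply Proposition~\ref{prop:free-product-complex} to verify that $\widehat X$ is a simply-connected, $C'(\frac1m)$, $m|r|$-circumscribed, $|r|$-thin complex, observe that $\abrm$ acts without inversions on $\widehat\Gamma$ so that the sharper threshold $3\lambda M<1$ from Remark~\ref{rem:no-inversions} applies, and invoke Theorem~\ref{thm:local-quasiconvexity} together with the equivalence from \cite{MaWi10b} to convert quasiconvexity of the $\mc H$-orbit into relative quasiconvexity of $\mc H$. You are in fact more careful than the paper in one spot: you correctly flag that Proposition~\ref{prop:free-product-complex} is only stated for $m>N(\mc A,\mc B,r)$ with $N$ unspecified, and you sketch how the Spelling Theorem yields the $C'(\frac1m)$ condition already for $m\geq 6$, which is what is needed to reconcile that proposition's threshold with the theorem's hypothesis $3|r|<m$; the paper's proof of Theorem~\ref{thm:lq-products} leaves this threshold-matching implicit.
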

\begin{proof}
By Proposition~\ref{prop:free-product-complex}, the Coned-off Cayley complex $\widehat X$ of $\mc G =\abrm$ with respect to $\{\mc A, \mc B\}$
is a $C'(\frac{1}{m} + \epsilon)$, simply-connected, uniformly circumscribed, and $|r|$-thin.  Since $ 3|r| <m$ and $\mc G$ acts without inversions on the 1-skeleton $\widehat \Gamma$ of $\widehat X$, the conclusion of Theorem~\ref{thm:local-quasiconvexity} for $\mc G$ and $\widehat \Gamma$ holds with $\lambda=\frac{1}{m}+\epsilon$ for some $\epsilon>0$.

The coned-off Cayley graph $\widehat \Gamma$ of $\mc G$, i.e., the one skeleton of $\widehat X$,  is a connected and fine hyperbolic graph
on which $\mc G$ acts cocompactly and with finite edge stabilizers, see for example~\cite[Prop. 4.2]{MaWi10b}.

Therefore, for each subgroup $\mc H <\mc G$ that is finitely generated relative to $\{\mc A, \mc B\}$,
there exists a connected and quasi-isometrically embedded $\mc H$-cocompact subcomplex of $\widehat \Gamma$.
By Theorem~\ref{def:ours}, such subgroups are quasiconvex relative to $\{\mc A, \mc B\}$.
\end{proof}

\bibliographystyle{plain}
\bibliography{xbib}

\end{document}